\def\ps@pprintTitle{%
 \let\@oddhead\@empty
 \let\@evenhead\@empty
 \def\@oddfoot{\centerline{\thepage}}%
 \let\@evenfoot\@oddfoot}
\newtheorem{theorem}{Theorem}
\newtheorem*{propa}{Proposition A}
\newtheorem*{propb}{Proposition B}
\newtheorem*{propc}{Proposition C}
\newtheorem*{propd}{Proposition D}
\newtheorem{definition}{Definition}
\newtheorem{lemma}{Lemma}
\newtheorem{proposition}{Proposition}
\newtheorem{remark}{Remark}
\newtheorem{conjecture}{Conjecture}
\newcommand*\xbar[1]{%
  \hbox{%
    \vbox{%
      \hrule height 0.5pt 
      \kern0.4ex
      \hbox{%
        \kern-0.15em
        \ensuremath{#1}%
        \kern-0.15em
      }%
    }%
  }%
}
\begin{document}
\begin{frontmatter}

\title{Branching Brownian motion in an expanding ball and application to the mild obstacle problem}

\author{Mehmet \"{O}z}
\ead{mehmet.oz@ozyegin.edu.tr}
\ead[url]{https://faculty.ozyegin.edu.tr/mehmetoz/}

\address{Department of Natural and Mathematical Sciences, Faculty of Engineering, \"{O}zye\u{g}in University, Istanbul, Turkey}

\begin{abstract}

We first study a $d$-dimensional branching Brownian motion (BBM) among mild Poissonian obstacles, where a random \emph{trap field} in $\mathbb{R}^d$ is created via a Poisson point process. The trap field consists of balls of fixed radius centered at the atoms of the Poisson point process. The mild obstacle rule is that when particles are inside traps, they branch at a lower rate, which is allowed to be zero, whereas when outside traps they branch at the normal rate. We prove upper bounds on the large-deviation probabilities for the total mass of BBM among mild obstacles, which we then use along with the Borel-Cantelli lemma to prove the corresponding strong law of large numbers. Our results are quenched, that is, they hold in almost every environment with respect to the Poisson point process. Our strong law improves on the existing corresponding weak law in \cite{E2008}. Then, we study a $d$-dimensional BBM inside subdiffusively expanding balls, where the boundary of the ball is \emph{deactivating} in the sense that once a particle of the BBM hits the moving boundary, it is instantly deactivated but can be reactivated at a later time provided its ancestral line is fully inside the expanding ball at that later time. We obtain a large-deviation result as time tends to infinity on the probability that the mass inside the ball is aytpically small. An essential ingredient in the proofs of the mild obstacle problem turns out to be the large-deviation result on the mass of BBM inside expanding balls.
 
\end{abstract}

\vspace{3mm}

\begin{keyword}
Branching Brownian motion \sep killing boundary \sep large deviations \sep strong law of large numbers \sep Poissonian traps \sep random environment
\vspace{3mm}
\MSC[2010] 60J80 \sep 60K37 \sep 60F15 \sep 60F10 
\end{keyword}

\end{frontmatter}

\pagestyle{myheadings}
\markright{BBM in expanding ball and among mild obstacles\hfill}

\section{Introduction}\label{intro}

In this work, we study two models involving a branching Brownian motion (BBM), where the population growth is slower than that of an ordinary (free) BBM. It is well-known that typically the population, that is, the total mass, of a BBM grows exponentially in time. To be precise, if $N_t$ denotes the total mass of a strictly dyadic BBM at time $t$ and $\beta$ is the branching rate, then $(N_t)_{t\geq 0}$ is a Yule process, and the limit $M:=\lim_{t\rightarrow\infty}N_t\, e^{-\beta t}$ exists, is positive and finite a.s. In each of the two models considered in this paper, there is a reproduction suppressing mechanism which contributes a subexponentially decaying factor to the typical population size. 

We first consider the model of BBM among \emph{mild} Poissonian obstacles. We study the growth of mass of a BBM evolving in a random environment in $\mathbb{R}^d$, which is composed of randomly located spherical \emph{traps} of fixed radius with centers given by a Poisson point process (PPP). The mild obstacle rule is that when a particle is inside the traps, it branches at a lower rate, which is allowed to be zero, than usual, that is, when it is not in a trap. The mild obstacle problem for BBM was proposed by Engl\"ander in \cite{E2008}, and on a set of full measure with respect to the PPP, a kind of weak law of large numbers (WLLN) was obtained (see \cite[Thm.\ 1]{E2008}) for the mass of the process as well as a result on its spatial spread (see \cite[Thm.\ 2]{E2008}). In Theorem~\ref{thm2}, by estimating successive large-deviation probabilities, we improve the WLLN in \cite{E2008} to the strong law of large numbers (SLLN). We also include the possibility of no branching inside the traps, which was not covered in \cite{E2008}. The case of zero branching inside the traps adds a major challenge to the problem. In this case, it is difficult to show that exponentially many particles are produced with `high' probability in the presence of mild obstacles, whereas if the branching inside the traps is a positive constant, say $\beta_1$, then a growth of $\sim e^{\beta_1 t}$ particles `comes for free' since the branching rate is at least $\beta_1$ everywhere on $\mathbb{R}^d$. 

Then, we consider a BBM in an expanding ball of fixed center, where the radius of the ball is increasing subdiffusively in time, and the boundary of the ball is \emph{deactivating} in the sense that once a particle of the BBM hits the moving boundary, it is instantly deactivated but can be reactivated at a later time provided its ancestral line is fully inside the expanding ball at that later time. On this model, in Theorem~\ref{thm1}, we obtain a large-deviation result, giving the large-time asymptotic behavior of the probability that the total mass of the BBM is atypically small in the expanding ball. 

An essential ingredient in the proof of Theorem~\ref{thm2} turns out be Theorem~\ref{thm1}, that is, the lower tail asymptotics for the mass of BBM in expanding balls. In the mild obstacle problem, a suitable time-dependent \emph{clearing} (see Definition~\ref{def1}) in the random environment in $\mathbb{R}^d$ serves as the expanding ball of Theorem~\ref{thm1}.  

\subsection{Formulation of the problems} 

We now describe the two sources of randomness in the models, and formulate the problems in a precise way. 

\textbf{1. Trap field and mild obstacle problem for BBM:} The setting of random obstacles in $\mathbb{R}^d$ is formed as follows. Let $\Pi$ be a Poisson point process (PPP) in $\mathbb{R}^d$ with constant intensity $\nu>0$, and $(\Omega,\mathbb{P})$ be the corresponding probability space with expectation $\mathbb{E}$. By a \emph{trap} associated to a point $x\in\mathbb{R}^d$, we mean a closed ball of fixed radius $a>0$ centered at $x$, and by a \emph{trap field}, we mean the random set
\begin{equation}
K=K(\omega):=\bigcup_{x_i\in\,\text{supp}(\Pi)}\bar{B}(x_i,a), \label{eqtrapfield}
\end{equation}
where $\bar{B}(x,a)$ denotes the closed ball of radius $a$ centered at $x\in\mathbb{R}^d$. 

In the first part of the current work, a branching Brownian motion (which is briefly described below) is assumed to live in $\mathbb{R}^d$, to which $K$ is attached. For $\omega\in\Omega$, we refer to $\mathbb{R}^d$ with $K(\omega)$ attached simply as the random environment $\omega$, and use $P^\omega$ to denote the conditional law of the BBM in the random environment $\omega$. The mild obstacle problem for BBM has the following rule: when a particle of BBM is in $K^c$, it branches at rate $\beta_2$, whereas when in $K$, it branches at a lower rate $\beta_1$ with $0\leq \beta_1<\beta_2$. That is, under the law $P^\omega$, the BBM has a spatially dependent branching rate
$$  \beta(x,\omega):=\beta_2\,\mathbbm{1}_{K^c(\omega)}(x)+\beta_1\,\mathbbm{1}_{K(\omega)}(x).$$
Our focus is on the total mass of BBM among mild obstacles. We first find upper bounds that are valid for large $t$ on the LD probabilities that the mass is atypically small and atypically large. Then, via Borel-Cantelli arguments, we obtain the corresponding SLLN, which says that the total mass of BBM among mild obstacles grows as its expectation as $t\to\infty$. That is, on a set of full $\mathbb{P}$-measure, in a loose sense, 
$$ \frac{\log N_t}{t} \:\approx\:  \frac{\log E^\omega[N_t]}{t} , \quad t\to\infty, \quad P^\omega\text{-a.s.} $$ 
(See Theorem~\ref{thm2} and Remark~\ref{remark4} for precise statements.) The result is valid in almost every environment $\omega$; hence, it is called a \emph{quenched} SLLN. We refer the reader to \cite[Section 1.2]{E2008} for a list of problems that serve as motivation to study the current model.

We emphasize that here we allow the possibility of complete suppression of branching in $K$, that is, $\beta_1=0$, which was not considered in \cite{E2008}. The case $\beta_1=0$ is inherently harder to deal with. The challenge is to produce exponentially many particles with `high' probability in the presence of mild traps. If $\beta_1=0$, it is difficult to show this, because there are random regions in $\mathbb{R}^d$ where particles don't branch at all; whereas if the branching inside the traps is a positive constant, say $\beta_1$, then a growth of $\sim e^{\beta_1 t}$ particles `comes for free' since the branching rate is at least $\beta_1$ everywhere on $\mathbb{R}^d$. Hence, if $\beta_1=0$, one must have some control over the motion of the BBM inside the traps. A priori, one may think that with significant probability the particles spend too much time inside the traps so that exponential growth cannot be achieved. We show, inter alia, in the proof of the lower bound of Theorem~\ref{thm2} that this is not the case.

\textbf{2. Branching Brownian motion in an expanding ball:} Let $Z=(Z_t)_{t\geq 0}$ be a strictly dyadic $d$-dimensional BBM with branching rate $\beta>0$, where $t$ represents time. Strictly dyadic means that every time a particle branches it gives precisely two offspring. The process can be described as follows. It starts with a single particle, which performs a Brownian motion (BM) in $\mathbb{R}^d$ for a random lifetime, at the end of which it dies and simultaneously gives birth to two offspring. Similarly, starting from the position where their parent dies, each offspring particle repeats the same procedure as their parent independently of others and the parent, and the process evolves through time in this way. All particle lifetimes are exponentially distributed with constant parameter $\beta>0$. For each $t\geq 0$, $Z_t$ can be viewed as a finite discrete measure on $\mathbb{R}^d$, which is supported at the positions of the particles at time $t$. For $t\geq 0$, we use $|Z_t|$ to denote the total mass of $Z$ at time $t$, and occasionally use $N_t:=|Z_t|$. Also, for a Borel set $A\subseteq \mathbb{R}^d$ and $t\geq 0$, we write $Z_t(A)$ to denote the mass of $Z$ that fall inside $A$ at time $t$. 

We also define a \emph{BBM deactivated at a moving boundary}. For a Borel set $A\in\mathbb{R}^d$, denote by $\partial A$ the boundary of $A$. Consider a family of Borel sets $B=(B_t)_{t\geq 0}$. Let $Z^B=(Z_t^{B_t})_{t\geq 0}$ be the BBM deactivated at $\partial B$, which can be obtained from $Z$ as follows. For each $t\geq 0$, start with $Z_t$, and delete from it any particle whose ancestral line up to $t$ has exited $B_t$ to obtain $Z^{B_t}_t$. This means, $Z_t^{B_t}$ consists of particles at time $t$ whose ancestral lines have been confined to $B_t$ up to time $t$ (but may have left $B_s$ at an earlier time $s$), and therefore it can be viewed as a finite discrete measure in $B_t$. The terminology `deactivated' reflects on the following nature of the process: if a particle of $Z_s$ is not part of $Z_s^{B_s}$ at time $s$, this means it has been deactivated since its ancestral line has exited $B_s$ over $[0,s]$; it could reappear (or be reactivated) at a later time $u$ and hence be a part of $Z_u^{B_u}$ provided its ancestral line becomes fully contained in $B_u$ over $[0,u]$. 

This choice of a model for a BBM deactivated at a moving boundary is natural to apply in the mild obstacle problem for BBM. One can show that in almost every environment in $\mathbb{R}^d$, that is, $\mathbb{R}^d$ with a random trap field as in \eqref{eqtrapfield} is attached, certain trap-free regions of different sizes and locations exist, which may be suitably indexed by time $t$ and thus serve as suitable time-dependent balls with moving boundaries. It is the free growth of the BBM inside these trap-free regions that decides the overall growth rate of the BBM in the random environment in $\mathbb{R}^d$.

We denote by $\widehat{\Omega}$ the sample space for the BBM, and use $P_x$ and $E_x$, respectively, to denote the law and corresponding expectation of a BBM starting with a single particle at $x\in\mathbb{R}^d$. By an abuse of notation, we use $P_x$ and $E_x$ also for the BBM deactivated at a boundary. For simplicity, we set $P=P_0$. Also, we sometimes use
$$ n_t:=|Z_t^{B_t}|$$ 
to denote the mass at time $t$ of a BBM deactivated at $\partial B$.

Consider a \emph{radius} function $r:\mathbb{R}_+\to \mathbb{R}_+$ with $\lim_{t\rightarrow\infty}r(t)=\infty$, which is subdiffusively increasing, that is, $r$ is increasing and $r(t)=o(\sqrt{t})$ as $t\rightarrow\infty$. For $t>0$, let $B_t:=B(0,r(t))$, and $p_t$ be the probability of confinement to $B_t$ of a standard BM (starting from the origin) over $[0,t]$. In the second part of the current work (see Theorem~\ref{thm1}), for a suitably decreasing function $\gamma:\mathbb{R}_+\to \mathbb{R}_+$ with $\lim_{t\rightarrow\infty} \gamma(t)=0$, we find the asymptotic behavior as $t\rightarrow\infty$ of the large-deviation (LD) probability
$$P\left(\big|Z_t^{B_t}\big|<\gamma_t p_t e^{\beta t}\right) ,$$
where we have set $\gamma_t=\gamma(t)$ for convenience. It is easy to show that $E[n_t]=p_t e^{\beta t}$; therefore, since $\lim_{t\rightarrow\infty} \gamma(t)=0$, for large $t$ one could suspect that $\gamma_t p_t e^{\beta t}$ is atypically small for the mass of a BBM deactivated at $\partial B$. Indeed, Theorem~\ref{thm1} verifies that this is the case. To the best of our knowledge, it is not known whether the almost sure growth of $n_t$ agrees with its expected growth.

\subsection{History}

The study of branching diffusions among random obstacles in $\mathbb{R}^d$ goes back to Engl\"ander \cite{E2000}, who studied a BBM among \emph{hard} Poissonian obstacles in the case of a uniform field, and obtained the asymptotic probability of survival for the system as $t\rightarrow\infty$ in $d\geq 2$. In the hard obstacle model, the process is killed instantly when a particle hits the traps. Engl\"ander and den Hollander \cite{EH2003} then studied the more interesting case where the trap intensity was radially decaying as
\begin{equation} \label{radialdecay}
\frac{\text{d}\nu}{\text{d}x} \sim \frac{\ell}{|x|^{d-1}}, \quad |x|\rightarrow\infty, \quad \ell>0 , 
\end{equation}
where $\text{d}\nu/\text{d}x$ denotes the density of the mean measure of the PPP with respect to the Lebesgue measure. It is shown in \cite{EH2003} that the decay rate in \eqref{radialdecay} is interesting, because it gives rise to a phase transition at a critical intensity $\ell=\ell_{cr}>0$, at which the behavior of the system changes in terms of the optimal survival strategy. In contrast, if the decay order is larger (or smaller), there will be no such phase transition and the optimal survival strategy will simply be as in the case of the decay in \eqref{radialdecay} and $\ell>\ell_{cr}$ (or $\ell<\ell_{cr}$). In both \cite{E2000} and \cite{EH2003}, the branching rule was taken as strictly dyadic, and the main result was the exponential asymptotic decay rate of the annealed survival probability as $t\rightarrow\infty$; in addition, in \cite{EH2003}, several optimal survival strategies were proved. For a BBM with a generic branching law, denote by $p_0$ the probability that a particle gives no offspring at the end of its lifetime. In \cite{O2016}, the work in \cite{E2000} was extended to a BBM with a generic branching law, including the case where $p_0>0$. Likewise in \cite{OCE2017}, the work in \cite{EH2003} on the radially decaying trap field was extended to a BBM with a generic branching law, with the possibility of $p_0>0$. Recently in \cite{OE2019}, conditioning the BBM on the event of survival from hard Poissonian obstacles, \"Oz and Engl\"ander proved several optimal survival strategies in the annealed environment, with particular emphasis on the population size.

The annealed setting can be quite different from the quenched setting. An annealed result is obtained by averaging over all environments, that is, with the notation introduced above, the law $(\mathbb{E}\,\otimes P^\omega)(\cdot)$ is used to calculate probabilities. This means, joint strategies involving both the random process (the BBM in the current problem) and the random environment can be used to realize the event in question, whereas only strategies involving the random process which can be realized in almost every environment can be used in the quenched setting. In this sense, the system has significantly more `freedom' in the annealed setting. For instance, the quenched and annealed asymptotics are different for the classical problem of Brownian survival among Poissonian traps (see Theorems $4.5.1$ and $4.5.3$ in \cite{S1998}).       

We refer the reader to \cite{E2007} for a survey, and to \cite{E2014} for a detailed treatment on the topic of BBM among random obstacles, and to \cite{LV2012} for a related problem where a critical BBM that is killed at a small rate inside the traps (such traps are called \emph{soft} obstacles) is studied. We repeat that the mild obstacle model studied in the current work was proposed by Engl\"ander in \cite{E2008}. It is the partial aim of this work to improve the WLLN therein for the population size of the BBM to the SLLN using purely probabilistic techniques as opposed to techniques involving partial differential equations (PDEs) in \cite{E2008}, as well as extending the results to the case of no branching within the obstacles.  

The study of branching diffusions in restricted domains with absorbing boundaries goes back to Sevast'yanov \cite{S1958}, who studied the survival of such systems in bounded domains in $\mathbb{R}^d$. In \cite{K1978}, Kesten studied a BBM with negative drift in one dimension starting with a single particle at position $x>0$ in the presence of absorption at the origin. He obtained a survival criterion depending on the drift of the process, and an asymptotic result on the number of particles in a given interval. Later, various further results were obtained on the one-dimensional model with absorption at a one-sided barrier. In \cite{N1987}, considering a BBM starting with a single particle at the origin and with a strong enough negative drift so as to make extinction almost sure, Neveu studied the process $(Z_x)_{x\geq 0}$ formed by the total mass that is frozen upon exiting $((-\infty,-x), x\geq 0)$. Berestycki et al.\ followed up on Kesten's model of BBM with absorption at the origin, and in \cite{BBS2011} and \cite{BBS2013} studied, respectively, the survival probability of the BBM near the critical drift as a function of $x>0$, and the genealogy of the process. In \cite{HHK2006}, on the same model, Harris et al.\ studied the one-sided FKPP travelling-wave equation, and obtained several results on the asymptotic speed of the rightmost particle, the almost sure exponential growth rate of particles having different speeds, and the asymptotic probability of presence of the BBM in the subcritical speed area. Then, in \cite{M2013}, Maillard improved on Neveu's work in the case where the process goes extinct almost surely, and obtained precise asymptotics on the number of absorbed particles at the linear one-sided barrier. More recently in \cite{H2016}, Harris et al.\ studied a BBM with drift in a fixed-size interval, that is, a two-sided barriered version of Kesten's model, and obtained a survival criterion involving a critical width for the interval, and also the asymptotics of the near-critical survival probability. The one-dimensional model involving a BBM with drift and a fixed barrier is equivalent to the model involving a BBM with no drift and a linearly moving barrier. The second part of the current work gives a lower tail large-deviation result on the population size of a BBM in a subdiffusively expanding (time-dependent) ball in $d$ dimensions with deactivating boundary. 


\medskip

\noindent \textbf{Notation:} We use $c,c_0,c_1,\ldots$ as generic positive constants, whose values may change from line to line. If we wish to emphasize the dependence of $c$ on a parameter $p$, then we write $c(p)$. We denote by $f:A\to B$ a function $f$ from a set $A$ to a set $B$. For two functions $f,g:\mathbb{R}_+\to\mathbb{R}_+$, we write $g(t)=o(f(t))$ if $g(t)/f(t)\rightarrow 0$ as $t\rightarrow\infty$. Also, for a generic function $g:\mathbb{R}_+\to\mathbb{R}_+$, we occasionally set $g_t=g(t)$ for notational convenience. We use $\mathbb{N}$ as the set of positive integers. For $x\in\mathbb{R}^d$, we use $|x|$ to denote its Euclidean norm; also, for a generic finite set $S$, we use $|S|$ to denote its cardinality. We use $B(x,a)$ to denote the open ball of radius $a>0$ centered at $x\in\mathbb{R}^d$. For an event $A$, we use $A^c$ to denote its complement, and $\mathbbm{1}_A$ its indicator function. 

We denote by $X=(X_t)_{t\geq 0}$ a generic standard Brownian motion (BM) in $d$-dimensions, and use $\mathbf{P}_x$ and $\mathbf{E}_x$, respectively, as the law of $X$ started at position $x\in\mathbb{R}^d$, and the corresponding expectation.    

\medskip

\noindent \textbf{Outline:} The rest of the paper is organized as follows. In Section~\ref{section2}, we present our main results. In Section~\ref{newsection}, we discuss some further problems related to the model of BBM among random obstacles. Section~\ref{section3} contains several introductory results, which serve as preparation for the proofs of Theorem~\ref{thm2} and Theorem~\ref{thm1}. In Section~\ref{section5} and Section~\ref{section4}, we present, respectively, the proofs of Theorem~\ref{thm2} and Theorem~\ref{thm1}.

\section{Main Results} \label{section2}

The first main result is a quenched SLLN for the total mass of BBM among mild Poissonian obstacles in $\mathbb{R}^d$. Recall that $(\Omega,\mathbb{P})$ is the probability space for the PPP that creates the random environment, and $N_t:=|Z_t|$. We now introduce further notation. Let $\lambda_{d,r}$ be the principal Dirichlet eigenvalue of $-\frac{1}{2}\Delta$ on $B(0,r)$ in $d$ dimensions (see \cite[Section 1.10]{E2014} where $\lambda_c(-\frac{1}{2}\Delta,B(0,r))=\lambda_{d,r}$). Write $\lambda_d:=\lambda_{d,1}$, and let $\omega_d$ be the volume of the $d$-dimensional unit ball. For $d\geq 1$ and $\nu>0$, define the constant
\begin{equation} \label{eqconstant}
c(d,\nu):=\lambda_d \left(\frac{d}{\nu \omega_d}\right)^{-2/d}.
\end{equation}   

\begin{theorem}[Quenched SLLN for BBM among mild obstacles]\label{thm2}
On a set of full $\mathbb{P}$-measure,
\begin{equation} \label{eqthm2}
\underset{t\rightarrow\infty}{\lim} (\log t)^{2/d}\left(\frac{\log N_t}{t}-\beta_2\right)=-c(d,\nu) \quad P^\omega\text{-a.s.} 
\end{equation}
\end{theorem}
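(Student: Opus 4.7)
\textbf{Proof plan for Theorem~\ref{thm2}.} My plan is to establish matching large-deviation upper bounds on both tails of $N_t$ around the target $\beta t-c(d,\nu)\,t/(\log t)^{2/d}$, make them summable along a deterministic subsequence $t_n\to\infty$, and conclude via Borel--Cantelli together with a monotonicity interpolation in $t$. Precisely, for every $\epsilon>0$ and $\mathbb{P}$-almost every $\omega$, I aim at
$$\sum_n P^\omega\!\Big(\log N_{t_n}<\beta t_n-(c(d,\nu)+\epsilon)\tfrac{t_n}{(\log t_n)^{2/d}}\Big)<\infty, \quad \sum_n P^\omega\!\Big(\log N_{t_n}>\beta t_n-(c(d,\nu)-\epsilon)\tfrac{t_n}{(\log t_n)^{2/d}}\Big)<\infty.$$
Since the exponent is slowly varying in $t$ and $N_t$ is nondecreasing under the natural coupling, choosing $t_{n+1}-t_n$ of polylogarithmic order transfers the a.s.\ bound from $\{t_n\}$ to all real $t$.

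For the \emph{lower tail}, I use that the BBM grows fastest in the largest accessible trap-free clearing. A standard Poisson extreme-value estimate shows that, on a set of full $\mathbb{P}$-measure and for all large $t$, the environment $\omega$ contains a ball $B(y_t,R_t)\subseteq K^c(\omega)$ with $R_t=(d\log t/(\nu\omega_d))^{1/d}(1-\eta)$ and $|y_t|\leq t^{1-d\eta}$, where $\eta=\eta(\epsilon)$ is small. On this good environment I (i) drive a single ancestral line from $0$ to $y_t$ with branching suppressed along the way, at log-cost $O(|y_t|)=o(t/(\log t)^{2/d})$, and (ii) apply Theorem~\ref{thm1}, translated to center $y_t$ and with $r(\cdot)\equiv R_t=o(\sqrt t)$: since $p_t=\exp(-\lambda_{d,R_t}t(1+o(1)))=\exp(-c(d,\nu)t/(\log t)^{2/d}(1+o(1)))$, Theorem~\ref{thm1} provides an $\exp(-cR_t)$ upper bound on the probability that the mass inside the clearing falls below $\gamma_t p_t e^{\beta t}$ for the choice $\gamma_t=e^{-\kappa R_t}$. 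Taking $t_n$ sparse enough makes the resulting probabilities summable.

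For the \emph{upper tail}, I combine the many-to-one formula with Markov's inequality:
$$P^\omega(N_t>a)\leq a^{-1}E^\omega[N_t]=a^{-1}\mathbf{E}_0\!\left[\exp\!\left(\int_0^t\beta(X_s,\omega)\,ds\right)\right].$$
Writing $\beta(x,\omega)=\beta-(\beta-\bar\beta)\mathbbm{1}_{K(\omega)}(x)$, this is a quenched soft-obstacle Feynman--Kac functional. Sznitman-type enlargement-of-obstacles estimates, combined with a Gaussian confinement of the underlying BM to $B(0,Ct)$, yield, $\mathbb{P}$-a.s.\ and for all large $t$,
$$E^\omega[N_t]\leq \exp\!\left(\beta t-c(d,\nu)(1-\epsilon)t/(\log t)^{2/d}\right),$$
the essential input being that every trap-free ball in $B(0,Ct)$ has radius at most $(d\log t/(\nu\omega_d))^{1/d}(1+o(1))$ quenched. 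Markov then delivers the upper-tail summability.

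I expect the \emph{main obstacle} to be the upper bound, because, unlike the lower bound, it does not reduce to Theorem~\ref{thm1} and instead requires an independent quenched spectral estimate for $\tfrac12\Delta+\beta(\cdot,\omega)$ uniformly over growing boxes, holding simultaneously for all large $t$ on a single $\mathbb{P}$-full-measure event. Isolating the leading constant $c(d,\nu)$ with only $o(1)$ error---and doing so even when branching inside traps is merely slowed rather than killed, i.e.\ $\bar\beta>0$---is the delicate point. Once this is in hand, the matching lower bound via Theorem~\ref{thm1} and the Borel--Cantelli / monotonicity interpolation to all real $t$ are routine.
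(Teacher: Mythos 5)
Your overall skeleton (two-sided LD bounds, summability along a sparse subsequence, monotone interpolation) matches the paper, and your upper-tail treatment via $E^\omega[N_t]$ and Markov is exactly what the paper does (it simply cites the quenched expectation asymptotics \eqref{eqexp} from \cite{E2008} rather than re-deriving them). But there is a genuine gap in your lower-tail argument, and it is precisely where the paper has to work hardest. Your step (i) --- ``drive a single ancestral line from $0$ to $y_t$ with branching suppressed, at log-cost $O(|y_t|)$'' --- is a \emph{rare-event strategy}: it shows that the event ``a particle reaches the clearing and then populates it'' has probability \emph{at least} $e^{-O(|y_t|)}$. That yields a lower bound on $E^\omega[N_t]$ or on $P^\omega(N_t\geq\cdot)$, but for the SLLN you need an \emph{upper} bound on $P^\omega(N_t<\cdot)$ that is summable; your argument only gives $P^\omega(N_t<\cdot)\leq 1-e^{-O(|y_t|)}(1-e^{-cR_t})$, which is essentially $1$. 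Nor can you repair this by arguing that a single Brownian ancestral line hits the distinguished clearing $B(y_t,R_t)$ with high probability: a ball of radius $O((\log t)^{1/d})$ at distance $t^{1-d\eta}$ is not hit by a single BM within time $t$ except with small probability ($d\geq2$). The paper's resolution is a two-stage bootstrap: first (Lemma~\ref{lemma2} and Part 1) it shows that a Brownian ancestral line hits \emph{some} clearing of radius $\asymp(\log t)^{1/d}$ among a dense family of $t/h(t)$ candidate balls along its own trajectory, except with probability $e^{-t^{1/3}}$, whence by Theorem~\ref{thm1} the BBM has at least $e^{\delta t}$ particles except with probability $e^{-c(\log t)^{1/d}}$ --- this is also what handles $\bar\beta=0$, which your plan does not address on the lower-tail side; second (Part 2) it uses the \emph{independence of these exponentially many particles} so that at least one of them reaches the single large clearing $B(x_0,R(t)+1)$ by time $m(t)$ except with probability $(1-q_t)^{e^{\delta\ell(t)}}$, which is superexponentially small even though $q_t$ itself is exponentially small. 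Only then is Theorem~\ref{thm1} applied in the big clearing.

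Two smaller points. First, you have the difficulty inverted: the upper tail is the cited/easy part, while the lower tail is where the new probabilistic input (Lemma~\ref{lemma2}, the bootstrap, and Theorem~\ref{thm1}) is needed. Second, your interpolation grid is too dense: the best available lower-tail bound decays only like $e^{-c(\log t)^{1/d}}$ (the bottleneck is the clearing-hitting/growth estimate, not a stretched-exponential in $t$), so a subsequence with polylogarithmic gaps gives a divergent series for $d\geq2$; the paper must take $t_k=f(k)=\exp[(2/c)^d(\log k)^d]$ and then verify separately that $f(k+1)-f(k)=o\bigl(f(k)/(\log f(k))^{2/d}\bigr)$ so that monotonicity of $N_t$ still closes the gap between consecutive times.
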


\begin{remark} \label{remark0}
Note that the branching rate $\beta_1$ in the trap field $K$ and the trap radius $a$ do not appear in the formula. The rough intuition for this, is as follows. The growth of mass among mild traps, to the leading order, is entirely determined by the free growth inside a `large' clearing (see Definition~\ref{def1}), which is known to exist almost surely regardless of the values of $\beta_1$ and $a$ (see Proposition D). Also, regardless of $\beta_1$ and $a$, with high probability the system is able to hit such a clearing soon enough so that the sub-BBM emanating from the particle that hits the large clearing produces sufficiently many particles inside this clearing in the remaining time. The details are presented in the proof of the lower bound of Theorem~\ref{thm2}. Needless to say, the growth inside the large clearing does not feel the effect of either $\beta_1$ or $a$. In essence, the growth of mass is determined by large trap-free regions rather than the traps, which is why the result is quite robust to the details of the trapping mechanism such as the values of $\beta_1$ and $a$.
\end{remark}

\begin{remark} \label{remark4}
It was shown in \cite{E2008} that on a set of full $\mathbb{P}$-measure,
\begin{equation} \label{eqexp}
E^\omega[N_t]=\exp\left[\beta_2 t-c(d,\nu)\frac{t}{(\log t)^{2/d}}(1+o(1))\right].
\end{equation}
Theorem~\ref{thm2} is called a SLLN for BBM among mild obstacles, because it says that with $P^\omega$-probability one,
the total mass of BBM among mild obstacles grows as its expectation as $t\rightarrow\infty$. The reason why it is called a \emph{quenched} SLLN is that it holds on a set of full $\mathbb{P}$-measure.

It would be difficult to obtain any finer result than the one in Theorem~\ref{thm2} for the following reason. The expected value formula in \eqref{eqexp} comes from a `one-particle picture' along with the many-to-one formula for spatial branching processes (see, for instance \cite[Lemma 1]{GHK2022}), where the one-particle picture gives the quenched asymptotic behavior as $t\to\infty$ of the survival probability of a single Brownian motion among soft Poissonian obstacles with killing function $W(\cdot)=(\beta_2-\beta_1)\mathbbm{1}_{B(0,a)}(\cdot)$ (except that $W$ is not summed up on overlapping balls). This quenched result is known from Sznitman's celebrated work \cite[Theorem 2.6]{S1993}, and says that the aforementioned large-time survival probability decays as 
$$ \exp\left[-c(d,\nu)\frac{t}{(\log t)^{2/d}}(1+o(1))\right]  $$
on a set of full $\mathbb{P}$-measure. To the best of our knowledge, a lower order correction to this result that accounts for the $o(1)$ term above, has not been obtained. Therefore, a finer result on even $E^\omega[N_t]$ seems far from trivial.
\end{remark}

Our second main result gives the large-time asymptotic behavior of the probability that the mass of BBM inside a subdiffusively expanding ball $B=(B_t)_{t\geq 0}$ that is deactivated at the boundary of the ball, is atypically small. A subdiffusive expansion means that the ball is expanding slower than the rate at which a typical BM moves away from the origin, which means for large $t$ it would be a `rare event' for the BM to be confined in $B_t$. For a generic standard Brownian motion $X=(X_t)_{t\geq 0}$ and a Borel set $A\subseteq\mathbb{R}^d$, define $\sigma_A=\inf\{s\geq 0:X(s)\notin A\}$ to be the first exit time of $X$ out of $A$. 

\begin{theorem}[Large-deviation for mass of BBM in an expanding ball] \label{thm1}
Let $r:\mathbb{R}_+ \to \mathbb{R}_+$ be increasing such that $r(t)\to\infty$ as $t\to\infty$ and $r(t)=o(\sqrt{t})$. Also, let $\gamma:\mathbb{R}_+ \to \mathbb{R}_+$ be defined by $\gamma(t)=e^{-\kappa r(t)}$, where $\kappa>0$ is a constant. For $t>0$, set $B_t=B(0,r(t))$, $p_t=\mathbf{P}_0(\sigma_{B_t}\geq t)$, and $n_t=|Z_t^{B_t}|$. 

\noindent If $0<\kappa\leq\sqrt{\beta/2}$, then
\begin{equation} \label{eq1}
\underset{t\rightarrow\infty}{\lim}\,\frac{1}{r(t)}\log P\left(n_t<\gamma_t p_t e^{\beta t}\right)=-\kappa,
\end{equation} 
and if $\kappa>\sqrt{\beta/2}$, then
\begin{align} 
-(\kappa \wedge \sqrt{2\beta})&\leq \underset{t\rightarrow\infty}{\liminf}\,\frac{1}{r(t)}\log P\left(n_t<\gamma_t p_t e^{\beta t}\right) \label{eq2} \\
&\leq \underset{t\rightarrow\infty}{\limsup}\,\frac{1}{r(t)}\log P\left(n_t<\gamma_t p_t e^{\beta t}\right)\leq -\sqrt{\beta/2}, \label{eq200}
\end{align} 
where we use $a\wedge b$ to denote the minimum of the numbers $a$ and $b$.
\end{theorem}

\begin{remark} \label{remark1}
The reason we call $P(n_t<\gamma_t p_t e^{\beta t})$ with $\gamma_t= e^{-\kappa r(t)}$ a \emph{large-deviation (LD)} probability is that with this choice of $\gamma_t$, both $P(n_t<\gamma_t p_t e^{\beta t})$ and $P(n_t=0)$ decay as $e^{-c r(t)}$ to the leading order for large $t$, where the values of the constant $c>0$ may differ.  

Indeed, start with
\begin{equation*} 
P(n_t<\gamma_t p_t e^{\beta t})\geq P(n_t=0).
\end{equation*}
One way to realize $\{n_t=0\}$ is to completely suppress the branching and move the initial particle out of $B_t:=B(0,r(t))$ over $[0,k r(t)]$, where $k>0$ is a constant. The probability of realizing this joint strategy is
\begin{equation}
\exp\left[-\beta k r(t)-\frac{r(t)}{2 k}(1+o(1))\right], \label{eq3}
\end{equation}  
where the second term in the exponent comes from Proposition A (see Section~\ref{section3}) along with Brownian scaling. To minimize the absolute value of the exponent in \eqref{eq3}, set $\beta k r(t)=r(t)/(2k)$, which yields $k=1/(\sqrt{2\beta})$. With this choice of $k$, we arrive at
\begin{equation} \label{eq30}
P(n_t=0)\geq \exp\left[-\sqrt{2\beta}r(t)(1+o(1))\right] .
\end{equation}

The lower bound strategies presented here for $\{n_t<\gamma_t p_t e^{\beta t}\}$ and $\{n_t=0\}$ turn out to be the same when $\kappa\geq\sqrt{2\beta}$, but different when $0<\kappa<\sqrt{2\beta}$ (see the proof of the lower bound of Theorem~\ref{thm1} in Section~\ref{section4}). Here is the heuristics behind these strategies. When $\kappa$ is small ($\kappa\leq\sqrt{2\beta}$), an obvious strategy to realize the unlikely event $\{n_t<\gamma_t p_t e^{\beta t}\}$ is to simply suppress the branching of the initial particle for long enough just so as to account for the factor of $\gamma_t$ in $\gamma_tp_te^{\beta t}$, that is, long enough so that $\gamma_t p_t e^{\beta t}$ is no longer atypical for the growth of the BBM in $B(0,r(t))$ in the remaining time interval. On the other hand, when $\kappa$ is large ($\kappa>\sqrt{2\beta}$), a less costly lower bound strategy is to simultaneously suppress the branching of the initial particle and move it out of the ball $B(0,r(t))$, over a time period of optimal length such that the joint cost of these two partial strategies are minimized. Once the initial particle is moved out of $B(0,r(t))$, the event $\{n_t=0\}$ is realized, and no further suppression of branching is needed.
\end{remark}

\begin{remark} \label{remark2}
In Theorem~\ref{thm1}, the reason why we assume $r(t)\to\infty$ as $t\to\infty$ and $r(t)=o(\sqrt{t})$ is that we are interested in a model where the typical population growth of BBM has an extra subexponentially decaying factor compared to that of an ordinary BBM. That is, we are looking for a typical population size of $f(t)e^{\beta t}$ at time $t$, where $f(t)\to 0$ subexponentially as $t\to\infty$. If $r(t)\leq M$ for all large $t$ for some $M>0$, then the extra factor decays at least exponentially fast; whereas if the expansion of $B(0,r(t))$ is diffusive or faster, the extra factor would not decay to zero. 

Also, in Theorem~\ref{thm1}, we only consider $\gamma$ with $\gamma_t=e^{-\kappa r(t)}$ for the following reason. It can be shown that if $\gamma_t\rightarrow 0$ as $t\rightarrow \infty$, then for all large $t$, $P(n_t<\gamma_t p_t e^{\beta t})\geq \delta \gamma_t$ for some $0<\delta<1$ (see the proof of the lower bound of Theorem~\ref{thm1} in Section~\ref{section4}). Hence, if $\gamma_t$ decays sufficiently slowly so as to satisfy $(\log\gamma_t)/r(t)\rightarrow 0$ as $t\rightarrow \infty$, then $\liminf_{t\rightarrow\infty}\frac{1}{r(t)}\log P\left(n_t<\gamma_t p_t e^{\beta t}\right)\geq 0$. Therefore, in view of \eqref{eq1} and \eqref{eq200} which hold in the case $\gamma_t=e^{-\kappa r(t)}$, the event $\{n_t<\gamma_t p_t e^{\beta t}\}$ would not be an LD event when $(\log\gamma_t)/r(t)\rightarrow 0$ as $t\rightarrow \infty$.    
\end{remark}

\begin{remark}
A close look at Theorem~\ref{thm1} suggests that there is a critical value of $\kappa$ at which a phase transition concerning the optimal strategy to realize the unlikely event $\{n_t<\gamma_t p_t e^{\beta t}\}$ occurs, but this critical value could be any number in $[\sqrt{\beta/2},\sqrt{2\beta}]$.

We believe that the lower bound in \eqref{eq2} is sharp, and therefore have the following conjecture.
\begin{conjecture} \label{conjecture1}
Under the assumptions of Theorem~\ref{thm1} and using the notation therein, for any $\kappa>0$,
\begin{equation}
\underset{t\rightarrow\infty}{\lim}\,\frac{1}{r(t)}\log P\left(n_t < \gamma_t p_t e^{\beta t}\right)= -(\kappa\wedge\sqrt{2\beta}). \nonumber
\end{equation}
\end{conjecture}
If the conjecture holds, this would also imply that the critical value of $\kappa$ is $\sqrt{2\beta}$, and that the presented lower bound strategies, that is, the strategy described in Remark~\ref{remark1} when $\kappa>\sqrt{2\beta}$, and the strategy described in the proof of the lower bound of Theorem~\ref{thm1} when $0<\kappa\leq\sqrt{2\beta}$, are indeed optimal. 

The reason why we believe that the lower bound in \eqref{eq2} is sharp, is as follows. In this kind of models involving branching, in order to realize large-deviation events where the population size is aytpically small with the lowest cost, the system has to prevent growth in the beginning of the relevant time interval. Once many particles are produced, the cost of controlling the growth will be much higher, and therefore such a strategy cannot be optimal. Both of the current lower bound strategies (the one in Remark~\ref{remark1} and the one in the proof of the lower bound of Theorem~\ref{thm1}) indeed prevent growth as early as possible, starting from the initial particle. Therefore, the lower bound in \eqref{eq2} is conjectured to be sharp.
\end{remark}

\section{Further problems} \label{newsection}

In this section, we discuss some further related problems. Recall that $K=K(\omega)$ denotes the trap field in the environment $\omega$. When we say a BBM has offspring law $(q_k)_{k\geq 0}$, we mean that each particle gives $k$ offspring upon branching with probability $q_k$ so that $q_k\geq 0$ for each $k\geq 0$ and $\sum_{k=0}^\infty q_k=1$.


\subsection{More general branching inside obstacles} \label{section3.1}

Consider a more general branching mechanism in $K$ such that when inside $K$ particles branch according to the offspring law $(p_k)_{k\geq 0}$ with $p_0=0$ as opposed to binary branching. We assume that $p_0=0$ since otherwise particles would be killed inside $K$ with positive probability, and we discuss the \emph{soft obstacle model} which allows killing inside $K$ as a separate problem below. Let $\mu_1=\sum_{k=1}^\infty k p_k$ be the associated mean number of offspring. We continue to assume binary branching outside $K$. Note that in this way, both the branching rate and the offspring mean depend on position as
\begin{align}
\beta(x,\omega) &=\beta_2\,\mathbbm{1}_{K^c(\omega)}(x)+\beta_1\,\mathbbm{1}_{K(\omega)}(x),  \label{branchinglaw} \\
\mu(x,\omega)   &=2\,\mathbbm{1}_{K^c(\omega)}(x)+\mu_1\,\mathbbm{1}_{K(\omega)}(x)  . \label{branchinglaw2}
\end{align} 
Intuitively, as long as $\beta_2>\beta_1(\mu_1-1)$, which means the net growth rate per particle outside $K$ is larger than the rate inside $K$, one would expect the growth of mass for large times to be governed by large clearings to the leading order similar to the case of binary branching inside $K$, and hence expect the SLLN in Theorem~\ref{thm2} to be robust against this change of offspring law inside $K$. Indeed, we have the following theorem.

\begin{theorem}\label{thm3}
For each $\omega$, suppose that the branching rate is as in \eqref{branchinglaw}, the branching is binary in $K^c(\omega)$, and $(p_k)_{k\geq 0}$ is the offspring law inside $K(\omega)$. Assume that $p_0=0$ and let $\mu_1=\sum_{k=1}^\infty k p_k<\infty$. Provided $\beta_2>\beta_1(\mu_1-1)$, on a set of full $\mathbb{P}$-measure,
\begin{equation} \label{eqthm3}
\underset{t\rightarrow\infty}{\lim} (\log t)^{2/d}\left(\frac{\log N_t}{t}-\beta_2\right)=-c(d,\nu) \quad P^\omega\text{-a.s.} 
\end{equation}
\end{theorem}

\begin{proof}
The lower bound in \eqref{eqthm3} follows from Theorem~\ref{thm2} by $\omega$-wise comparison with the case $\beta_1=0$, since $p_0=0$ by assumption here and the `worst' case of no branching, that is, $\beta_1=0$, inside $K$ is already covered by Theorem~\ref{thm2}. 

For the upper bound, we follow a similar approach as for that of Theorem~\ref{thm2}. Define $m(x,\omega)=\mu(x,\omega)-1$ and $m_1=\mu_1-1$. Applying the classical first moment formula for spatial branching processes $\omega$-wise (see, for instance \cite[Lemma 1]{GHK2022} for a more general version), we have
\begin{equation} \label{eqfirstmoment}
E^\omega[N_t] = \mathbf{E}_0 \left[\exp\left(\int_0^t \beta(X_s,\omega)m(X_s,\omega) ds \right)\right],
\end{equation}
where, as before, $X=(X_t)_{t\geq 0}$ is a Brownian motion in $d$-dimensions, and $\mathbf{E}_x$ is the corresponding expectation for a process started at $x$. Write $\beta=\beta_2-(\beta_2-\beta_1)\mathbbm{1}_{K}$ in \eqref{branchinglaw} and $m=1-(1-m_1)\mathbbm{1}_{K}$ in \eqref{branchinglaw2}, which yield $\beta m = \beta_2 - (\beta_2-\beta_1 m_1)\mathbbm{1}_K$. It follows from \eqref{eqfirstmoment} that
\begin{equation} \label{eqfirstmoment2}
E^\omega[N_t] = e^{\beta_2 t }\mathbf{E}_0 \left[\exp\left(-\int_0^t (\beta_2-\beta_1 m_1)\mathbbm{1}_{K(\omega)}(X_s) ds \right)\right] .
\end{equation}   
Note that by assumption, $\beta_2-\beta_1 m_1>0$. Then, the expectation on the right-hand side is the survival probability up to $t$ of a single Brownian motion among soft obstacles with killing function $W(x)=(\beta_2-\beta_1 m_1)\mathbbm{1}_{\bar{B}(0,a)}(x)$, except that $W$ is not summed on the overlapping balls. This, nonetheless, does not affect the asymptotic behavior of the survival probability (see \cite[Remark 4.2.2]{S1998}). Therefore, it follows from \cite[Theorem 4.5.1]{S1998} that on a set of full $\mathbb{P}$-measure, 
\begin{equation} \label{eqfirstmoment3}
E^\omega[N_t] = \exp\left[\beta_2 t-c(d,\nu)\frac{t}{(\log t)^{2/d}}(1+o(1))\right]. 
\end{equation}
By the Markov inequality, we then have for any $\varepsilon>0$,
\begin{align}
P^\omega\left((\log t)^{2/d}\left(\frac{\log N_t}{t}-\beta_2\right)+c(d,v)>\varepsilon\right) &=P^\omega\left(N_t>\exp\left[t\left(\beta_2-(c(d,\nu)-\varepsilon)(\log t)^{-2/d}\right) \right] \right)  \nonumber \\
&\leq \exp\left[-\varepsilon t(\log t)^{-2/d}+o\left(t(\log t)^{-2/d}\right)\right]. \label{eqfirstmoment4} 
\end{align}
The rest of the proof is identical to the corresponding part of the proof of the upper bound of Theorem~\ref{thm2}.
\end{proof}

In the proof above, observe that there was no need to modify Theorem~\ref{thm1}, which plays a key role in the proof of the lower bound of Theorem~\ref{thm2}. The reason is, in the latter proof, Theorem~\ref{thm1} is used to argue that there is sufficient growth of mass inside certain clearings (i.e., trap-free regions) with high probability, and changing the offspring law inside the traps has no effect on what happens inside clearings.

\subsection{More general branching outside obstacles}

Consider a more general branching mechanism in $K^c$ such that when inside $K^c$ particles branch according to the offspring law $(p^*_k)_{k\geq 0}$. Let $\mu_2=\sum_{k=0}^\infty k p^*_k$ be the associated mean. Assume that $\mu_2<\infty$ and set $m_2=\mu_2-1$. Without loss of generality, we also assume that $p^*_1=0$. Let $\mathcal{E}$ be the event of extinction for the underlying Galton-Watson process and set $q=P(\mathcal{E})$. From the elementary theory of branching processes, it is known that $q=1$ if and only if $\mu_2\leq 1$. Therefore, for meaningful results, we consider the supercritical case, that is, $\mu_2>1$. 

First, assume further that $p^*_0=0$ so that $q=0$. It follows by a similar reasoning leading to \eqref{eqfirstmoment3} that
\begin{equation} \nonumber
E^\omega[N_t] = \exp\left[\beta_2 m_2 t-c(d,\nu)\frac{t}{(\log t)^{2/d}}(1+o(1))\right]. 
\end{equation} 
Then, an application of Markov inequality similar to \eqref{eqfirstmoment4} followed by a standard Borel-Cantelli argument will lead to 
\begin{equation}
\underset{t\rightarrow\infty}{\limsup}\, (\log t)^{(2/d)}\left(\frac{\log N_t}{t}-\beta_2m_2\right)\leq -c(d,\nu) \quad P^\omega\text{-a.s.} \nonumber
\end{equation}
on a set of full $\mathbb{P}$-measure.

We believe that an SLLN similar to the one in Theorem~\ref{thm2} holds with $\beta_2$ replaced by $\beta_2 m_2$ in \eqref{eqthm2}, but to prove the lower bound, one needs to extend Theorem~\ref{thm1} to a BBM with a general offspring law $(p^*_k)_{k\geq 0}$. The upper bound of this extension turns out to be difficult, and the main reason is as follows. (A close look at the proof of Theorem~\ref{thm2} shows that we only use the upper bound in Theorem~\ref{thm1} to prove Theorem~\ref{thm2}.) In the proof of the upper bound of Theorem~\ref{thm1}, as an essential part of the second moment argument that is used to bound $P(n_t<\gamma_t p_t e^{\beta t})$ (for details, see the proof of the upper bound of Theorem~\ref{thm1}), one needs estimates on the distribution of the most recent common ancestor of two particles of BBM randomly chosen among the particles alive at time $t$, and this distribution is known in the case of binary branching. To the best of our knowledge, under a general offspring law, this distribution or any kind of useful estimate such as \eqref{eqdensityfunction} thereof is not known, and finding it would be a problem of independent interest. If one overcomes this problem, and hence is able to find a positive upper bound for $-\limsup_{t\to\infty}\,\frac{1}{r(t)}\log P\left(n_t<\gamma_t p_t e^{\beta t}\right)$, then the proof of the lower bound of Theorem~\ref{thm2} can be carried out in the same way as in Section~\ref{lowerbound} with the replacement of $\beta_2$ by $\beta_2 m_2$ throughout the proof.

Now suppose that $p^*_0>0$. In this case, there is positive probability of extinction for the underlying Galton-Watson process, that is, $q=P(\mathcal{E})>0$, and the process is conditioned on non-extinction for meaningful results on the growth of mass of BBM for large times. A detailed treatment of a BBM conditioned on non-extinction is given in \cite{OCE2017} (see Lemma 4 and Proposition 2 therein). In particular, conditioned on $\mathcal{E}^c$, the BBM has the two-type decomposition:
$$  (Z_t)_{t\geq 0} = (Z^1_t,Z^2_t)_{t\geq 0}, $$
where $Z^1$ is the process consisting of particles with infinite lines of descent, called the \emph{skeleton}, and $Z^2$ is the one consisting of particles with finite lines of descent. Conditional on $\mathcal{E}^c$, the process $Z^1 = (Z^1_t)_{t\geq 0}$ is a BBM of its own, with the same net growth per particle $\beta_2 m_2$ as the original process $Z$. Therefore, if an SLLN for $N_t$ similar to Theorem~\ref{thm2} is proved for the case of a general offspring distribution with $p^*_0=0$, then the same result would hold for the total mass $|Z^1|$ of the skeleton in the case $p^*_0>0$ conditional on $\mathcal{E}^c$. The analysis for the process $Z^2$ is not so simple since it is not a BBM, and is formed by a collection of independent BBMs initiated at random times along the skeletal lines (see \cite[Section 5]{OE2019} for details). 




\subsection{Soft killing inside obstacles} 

On top of complete suppression of branching, one may consider soft killing inside the obstacles. More generally, we may describe the soft obstacle model for BBM as follows. Consider a positive, bounded, measurable and compactly supported \emph{killing function} $W:\mathbb{R}^d\to (0,\infty)$, and for $\omega=\sum_i \delta_{x_i}\in\Omega$, $x\in\mathbb{R}^d$, define the potential
\begin{equation} 
V(x,\omega)=\sum_i W(x-x_i). \label{eqpotential}
\end{equation}
Then, the Poissonian trap field $K=K(\omega)$ in $\mathbb{R}^d$ and the soft obstacle model for BBM are formed as follows: 
\begin{equation} \label{eqtraprule}
 x\in K(\omega) \:\:\Leftrightarrow\:\:   V(x,\omega)>0  ,
\end{equation}
particles branch at the normal rate $\beta$ when outside $K$, whereas inside $K$ they are killed at rate $V=V(x,\omega)$ and their branching is completely suppressed. Note that the special case of constant killing rate inside spherical traps defined in \eqref{eqtrapfield} corresponds to taking $W=\alpha\mathbbm{1}_{\bar{B}(0,a)}$ with some constant $\alpha>0$ except that $W$ is not summed up on overlapping balls. A formal treatment of BBM killed at rate $V=V(x,\omega)$ in $\mathbb{R}^d$ is given in \cite{LV2012}.

Compared to the mild obstacle model, the main extra challenge comes from the fact that there is positive probability for the entire process to be killed in a finite time due to possible killing of particles. Therefore, to obtain meaningful results, the process is conditioned on the event of ultimate survival. Recall that $N_t=|Z_t|$ denotes the mass of BBM at time $t$. Let 
\begin{equation} \nonumber 
S_t=\{N_t\geq 1\},   \quad \quad S = \bigcap_{t\geq 0} S_t
\end{equation}
be, respectively, the event of survival up to time $t$, and the event of ultimate survival. As before, we use $P^\omega$ to denote the conditional law of the BBM in the random environment $\omega$. By continuity of measure from above, one easily sees that $\lim_{t\to\infty} P^\omega(S_t) = P^\omega(S)$. Moreover, it is not hard to prove the following result.

\begin{proposition}[Survival probability, soft obstacles] \label{newprop}
In the soft obstacle model described above, on a set of full $\mathbb{P}$-measure, $0<P^\omega(S)<1$.  
\end{proposition}

Note that any conditioning on $S$ or on $S_t$ for some $t$ will change the law of the BBM. In particular, the ancestral lines will no longer have the law of standard Brownian motions.   

In the mild obstacle problem, we have seen that the SLLN in Theorem~\ref{thm2} does not depend on the reduced rate $\beta_1$ inside the obstacles or the trap radius $a$ (see Remark~\ref{remark0}), and as the corresponding proof of the lower bound shows, this is because the growth is mainly due to the free growth inside a large clearing which the BBM is able to hit soon enough. Even when there is killing inside the obstacles, such a large clearing will continue to exist almost surely, and intuitively the BBM should still be able to hit this large clearing soon enough with high probability. Therefore, we have the following conjecture. Define the law $\widehat{P}^\omega$ as $\widehat{P}^\omega(\:\cdot\:)=P^\omega(\:\cdot\:\mid S)$.  

\begin{conjecture} \label{conjecture2}
In the soft obstacle model described above, on a set of full $\mathbb{P}$-measure,
\begin{equation} \nonumber
\underset{t\rightarrow\infty}{\lim} (\log t)^{2/d}\left(\frac{\log N_t}{t}-\beta\right)=-c(d,\nu) \quad \widehat{P}^\omega\text{-a.s.} 
\end{equation}
\end{conjecture}

We think that a proof for the conjecture above can be given by following a similar method as in the proof of Theorem~\ref{thm2}. Nonetheless, there will be some serious additional challenges compared to the mild obstacle case. We now give an outline of the main similarities and differences, comparing the two models. We only consider key difficulties, and note that there are various other minor hurdles that need to be overcome and are not present in the case of mild obstacles.

The expected mass at time $t$ can be obtained similar to \eqref{eqfirstmoment3}. Observe that soft killing under the potential $V$ together with complete suppression of branching inside the obstacles is tantamount to the offspring law $(p_k)_{k\geq 0}$ with $p_0=1$ and rate $\beta_1=V(x,\omega)$ in \eqref{branchinglaw}, which yields
\begin{align}
\beta(x,\omega) &=\beta_2\,\mathbbm{1}_{K^c(\omega)}(x)+V(x,\omega)\,\mathbbm{1}_{K(\omega)}(x), \nonumber \\ 
\mu(x,\omega)   &=2\,\mathbbm{1}_{K^c(\omega)}(x) \nonumber . 
\end{align} 
Note that $p_0=1$ implies $\mu_1=0$. By the construction in \eqref{eqtraprule}, $V=V\mathbbm{1}_K$. Then, $\beta(\mu-1)=\beta m=\beta_2-(\beta_2+V)\mathbbm{1}_K$. Apply the many-to-one formula in \eqref{eqfirstmoment} to obtain
$$ E^\omega[N_t] = e^{\beta_2 t }\mathbf{E}_0 \left[\exp\left(-\int_0^t (\beta_2\mathbbm{1}_{K(\omega)}(X_s)+V(X_s,\omega))ds\right) \right] . $$
Using \cite[Theorem 4.5.1]{S1998}, again, we obtain 
$$  E^\omega[N_t] = \exp\left[\beta_2 t-c(d,\nu)\frac{t}{(\log t)^{2/d}}(1+o(1))\right], $$
which holds on a set of full $\mathbb{P}$-measure. Then, to prove the upper bound of Conjecture~\ref{conjecture2}, we may proceed in the same way as in the  mild obstacle case. In contrast, the proof of the lower bound seems far more than a simple modification of that of the mild obstacle case. 

Firstly, since the soft killing rule applies only inside the trap field $K$, and the killing function $V$ is compactly supported by assumption (for comparison with spherical traps, take $a=\sup_{x\in K_0}\{|x|:V(x)>0\}$ where $K_0$ is the compact on which $V$ is supported), large clearings that were used in the proof of Theorem~\ref{thm2} will continue to exist in almost every environment under soft killing as well. This means, Theorem~\ref{thm1} will continue to be useful in its current form just as in the case of general branching inside obstacles, which was discussed in Section~\ref{section3.1}. 

We now discuss key additional challenges. The reader is referred to Section~\ref{lowerbound} in order to have a better understanding of what follows. As noted in Section~\ref{intro}, the most challenging part of the proof of the lower bound of Theorem~\ref{thm2} is Part 1, where we argue that in almost every environment exponentially many particles are produced with high probability. A close look at the proof shows, Lemma~\ref{lemma2} is the key tool in Part 1 and is based on careful choices of the time-scale $h(t)$ and the space scale $\rho(t)$, and the estimate $\mathbf{P}^\omega(E_{1,t}^c)\geq (\kappa_d/2) r^{d-1}(t)/\rho^{d-1}(t)$, which follows from elementary geometry. Under soft killing, this estimate no longer holds. In the language of Lemma~\ref{lemma2}, $P^\omega(E_{1,t}^c)$ is the probability that a Brownian particle hits a `good point' over the interval $[0,h(t)]$, and under soft killing the particle has to avoid being killed by $V$ and hit the $r(t)$-clearing at the same time. That is, the particle has to `hit and survive' over $[0,h(t)]$, which is more costly than just hitting the $r(t)$-clearing as in the case of mild obstacles. To avoid $V$, one must have control over the path of the particle over $[0,h(t)]$. As a result, the upper bound on $P^\omega(E_t, S_t)$ (one needs to estimate $P^\omega(E_t, S_t)$ instead of $P^\omega(E_t)$) will be significantly larger than the corresponding bound in the mild obstacle case, which in turn will affect the Borel-Cantelli argument in Part 4. Overall, by suitably adjusting the scales $h(t)$ and $\rho(t)$, it could still be possible to show that $P^\omega(E_t, S_t)\to 0$ as $t\to\infty$, but it is not clear whether the rate of convergence to zero is fast enough for the Borel-Cantelli argument to work in Part 4. Moreover, $h(t)$ and $\rho(t)$ will have to be chosen small (a closer look shows that powerlike growth will not work, one has to at best settle for logarithmic growth), which, as explained below, disturbs the preparation of the almost-sure environment where we wish our quenched results to hold.   


We now turn our attention to Lemma~\ref{lemma1}. Observe that after proving Lemma~\ref{lemma1}, we set $\ell=\rho(t)$ therein to prepare our almost-sure environment (see \eqref{eqenviron}). The key consideration is that roughly we split the cube $[-t,t]^d$ into $\sim [t/\rho(t)]^d$ smaller balls and would like to have a clearing of radius $\sim r(t)$ inside each small ball of radius $\rho(t)$. In view of $\ell=\rho(t)$, a smaller $\rho(t)$ would require stating Lemma~\ref{lemma1} with a larger number of cubes $C_{j,\ell}$. In particular, if $\rho(t)$ has logarithmic growth, we would need Lemma~\ref{lemma1} to be stated with exponentially many cubes $C_{j,\ell}$ as opposed to polynomially many, and this would certainly affect the Borel-Cantelli argument in Lemma~\ref{lemma1}. Hence, there will be extra challenges in preparing the a.s.\ environment as well due to the soft killing inside obstacles.

We finally briefly discuss how Parts 2-4 of the proof will be affected by soft killing. Part 2 of the proof concerns the hitting of the large clearing, denoted by $B(y_0,R(t))$ in the proof, over the interval $[\ell(t),m(t)]$ by the sub-BBM emanating from one of the many particles that are present at time $\ell(t)$. In the presence of the killing potential $V$, not only a progeny of such a particle should hit the large clearing by time $m(t)$, but it also has to avoid being killed by $V$ over $[\ell(t),m(t)]$. Therefore, one has to show that the probability for the BBM to hit $B(y_0,R(t))$ is large enough even in the presence of $V$, and to do this, one will presumably need to change the time scales $\ell(t)$ and $m(t)$. Part 3 of the proof will not be disturbed since $V$ does not affect what happens in the clearings, and since Theorem~\ref{thm1} continues to hold. On the other hand, Part 4 will drastically be affected by soft killing since throughout the proof, the rates of decay to zero for most of the relevant probabilities will be much slower compared to the case of mild obstacles, which will disturb the Borel-Cantelli argument in Part 4. Moreover, the total mass $N_t$ of BBM is no longer a.s.\ increasing due to possible killing of particles, and this monotonicity of $N_t$ was used in Part 4.

We stress that only key additional difficulties resulting from soft killing were discussed here. In fact, a follow-up paper to the current work is planned on the problem of BBM among soft obstacles.

\section{Preparations} \label{section3}

In this section, we present introductory results that serve as preparations for the proofs of the main theorems. The first two results are standard in the theory of Brownian motion. Proposition A is on the large-time asymptotic probability of atypically large Brownian displacements. For a proof, see for example \cite[Lemma 5]{OCE2017}. 

\begin{propa}[Linear Brownian displacements]\label{prop1}
For $k>0$,
\begin{equation}  \mathbf{P}_0\left(\underset{0\leq s\leq t}{\sup}|X(s)|>k t\right)=\exp\left[-\frac{k^2 t}{2}(1+o(1))\right].  \nonumber 
\end{equation}
\end{propa}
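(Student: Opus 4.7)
The plan is to prove matching upper and lower bounds, both at exponential rate $-k^2/2$, using only standard facts about one-dimensional Brownian motion and the chi-squared distribution.

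For the lower bound, I would simply drop the supremum. Since $\sup_{0\leq s\leq t}|X(s)|\geq |X(t)|$ and, by Brownian scaling, $|X(t)|^2/t$ is chi-squared with $d$ degrees of freedom,
\[
\mathbf{P}_0\Bigl(\sup_{0\leq s\leq t}|X(s)|>kt\Bigr) \;\geq\; \mathbf{P}_0\bigl(|X(t)|>kt\bigr) \;=\; \mathbf{P}\bigl(\chi_d^2 > k^2 t\bigr).
\]
The standard tail asymptotics for the chi-squared law give $\log \mathbf{P}(\chi_d^2 > x) = -x/2 + O(\log x)$, so this lower bound is $\exp[-\tfrac{k^2 t}{2}(1+o(1))]$.

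For the upper bound, a naive coordinatewise union bound yields only the suboptimal rate $-k^2/(2d)$, so I would instead project $X$ onto many directions at once. Fix $\alpha\in(0,\pi/2)$ and cover the unit sphere $S^{d-1}$ by $N(\alpha)=O(\alpha^{-(d-1)})$ spherical caps of angular radius $\alpha$ with centers $u_1,\ldots,u_{N(\alpha)}$. If $|X(s)|>kt$ and $X(s)/|X(s)|$ lies in the $i$-th cap, then $u_i\cdot X(s)\geq |X(s)|\cos\alpha > kt\cos\alpha$, so
\[
\Bigl\{\sup_{0\leq s\leq t}|X(s)|>kt\Bigr\} \;\subseteq\; \bigcup_{i=1}^{N(\alpha)}\Bigl\{\sup_{0\leq s\leq t} u_i\cdot X(s) > kt\cos\alpha\Bigr\}.
\]
Each $u_i\cdot X$ is a standard one-dimensional Brownian motion, so by the reflection principle and the Gaussian tail estimate,
\[
\mathbf{P}_0\Bigl(\sup_{0\leq s\leq t} u_i\cdot X(s) > kt\cos\alpha\Bigr) \;\leq\; 2\,\mathbf{P}_0\bigl(u_i\cdot X(t) > kt\cos\alpha\bigr) \;\leq\; \exp\Bigl(-\tfrac{k^2 t\cos^2\alpha}{2}(1+o(1))\Bigr).
\]
A union bound over the $N(\alpha)$ caps followed by the choice $\alpha=\alpha(t)\to 0$ slowly (for instance $\alpha(t)=1/\log t$, for which $\log N(\alpha(t))=O(\log\log t)=o(t)$ and $\cos^2\alpha(t)\to 1$) absorbs both the polynomial prefactor and the loss $1-\cos^2\alpha$ into the $1+o(1)$ factor, yielding the matching upper bound.

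The only real subtlety is this balancing step in the $\varepsilon$-net argument: the number of caps grows as $\alpha\to 0$ while the exponent loses a factor of $\cos^2\alpha<1$, and both errors must become negligible compared with $k^2 t/2$ as $t\to\infty$. Any $\alpha(t)\to 0$ with $\log N(\alpha(t))=o(t)$ works, so there is ample room for this to succeed.
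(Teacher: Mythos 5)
Your proof is correct. Note that the paper does not actually prove Proposition~A; it simply cites \cite[Lemma 5]{OCE2017}, so any self-contained argument is necessarily a ``different route.'' Your lower bound (drop the supremum, use the $\chi^2_d$ tail) and your upper bound via an $\varepsilon$-net of spherical caps combined with the one-dimensional reflection principle are both sound, and the balancing step $\alpha(t)\to 0$ with $\log N(\alpha(t))=o(t)$ is handled correctly — the polynomial number of caps and the loss $\cos^2\alpha$ are indeed absorbed into the $1+o(1)$. For what it is worth, the net can be avoided entirely by a radial reflection trick: if $\tau$ is the first hitting time of the sphere $\{|x|=kt\}$, then by the strong Markov property and the fact that, started from a point on that sphere, $X$ lies in the tangent half-space away from the origin (hence outside the ball) with probability $1/2$, one gets $\mathbf{P}_0\bigl(\sup_{0\leq s\leq t}|X(s)|>kt\bigr)\leq 2\,\mathbf{P}_0\bigl(|X(t)|\geq kt\bigr)$, and the $\chi^2_d$ tail then gives the sharp upper bound in one line. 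Your approach costs a little more bookkeeping but uses nothing beyond the one-dimensional reflection principle, which is a reasonable trade-off.
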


The following is a standard result on the large-time Brownian confinement in balls, and for instance can be deduced from \cite[Prop.\ 1.6]{E2014}, along with the scaling $\lambda_{d,r}=\lambda_d/r^2$. Recall that $\sigma_A=\inf\{s\geq 0:X(s)\notin A\}$ denotes the first exit time of $X$ out of $A$.

\begin{propb}[Brownian confinement in small balls] \label{prop2}
For $t>0$, let $B_t=B(0,r(t))$, where $r:\mathbb{R}_+ \to \mathbb{R}_+$ is such that $r(t)\to\infty$ as $t\to\infty$ and $r(t)=o(\sqrt{t})$. Then, as $t\to\infty$,
\begin{equation} \mathbf{P}_0\left(\sigma_{B_t}\geq t\right)=\exp\left[-\frac{\lambda_d t}{r^2(t)}(1+o(1))\right]. \nonumber
\end{equation}
\end{propb}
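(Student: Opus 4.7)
The plan is to reduce the large-$t$ asymptotics of Brownian confinement in the \emph{shrinking-to-diffusive-scale} ball $B(0,r(t))$ to the standard spectral asymptotics for confinement in a \emph{fixed} unit ball, via Brownian scaling. If $X=(X(s))_{s\geq 0}$ is a standard $d$-dimensional BM started at the origin, then $\widetilde{X}(u):=X(r^2(t)u)/r(t)$ is also a standard BM started at the origin, and the event $\{\sigma_{B(0,r(t))}(X)\geq t\}$ coincides with $\{\sigma_{B(0,1)}(\widetilde{X})\geq s_t\}$, where $s_t:=t/r^2(t)$. Since $r(t)=o(\sqrt{t})$, we have $s_t\to\infty$, so the proposition reduces to showing
$$\mathbf{P}_0(\sigma_{B(0,1)}\geq s)=\exp\bigl[-\lambda_d s\,(1+o(1))\bigr]\quad\text{as } s\to\infty.$$

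For this, I would use the Dirichlet eigenfunction expansion of $-\tfrac12\Delta$ on $B(0,1)$. Let $\{(\lambda_n,\phi_n)\}_{n\geq 1}$ be the eigenpairs with $\phi_n$ orthonormal in $L^2(B(0,1))$ and $\lambda_1=\lambda_d<\lambda_2\leq\cdots$. The Dirichlet heat kernel on $B(0,1)$ expands as $p_s^{B(0,1)}(0,y)=\sum_n \phi_n(0)\phi_n(y)e^{-\lambda_n s}$, and integrating in $y\in B(0,1)$ gives
$$\mathbf{P}_0(\sigma_{B(0,1)}\geq s)=\sum_{n\geq 1} c_n\,\phi_n(0)\,e^{-\lambda_n s},\qquad c_n:=\int_{B(0,1)}\phi_n(y)\,dy.$$
By Krein--Rutman, the principal eigenfunction $\phi_1$ may be taken strictly positive on $B(0,1)$, so both $\phi_1(0)>0$ and $c_1>0$. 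Therefore the $n=1$ term dominates and
$$\mathbf{P}_0(\sigma_{B(0,1)}\geq s)=c_1\phi_1(0)\,e^{-\lambda_d s}\,\bigl(1+O(e^{-(\lambda_2-\lambda_d)s})\bigr).$$
Taking logarithms, the positive constant $\log(c_1\phi_1(0))$ and the exponentially small remainder are absorbed into an $o(s)$ term, giving $\log\mathbf{P}_0(\sigma_{B(0,1)}\geq s)=-\lambda_d s(1+o(1))$.

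Substituting $s=s_t$ and using $\lambda_d s_t=\lambda_d t/r^2(t)$ (which is precisely the scaling $\lambda_{d,r}=\lambda_d/r^2$ in disguise) yields the desired asymptotic. The only mildly subtle point is the justification of the spectral expansion and termwise estimate at the boundary datum $x=0$, but this is classical for smooth bounded domains and in particular the unit ball (where the eigenfunctions are explicit radial Bessel functions); alternatively, one can invoke \cite[Prop.~1.6]{E2014} directly, as the author suggests.
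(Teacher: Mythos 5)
Your proof is correct and follows exactly the route the paper indicates (the paper gives no proof of its own, only the pointer to the fixed-ball spectral asymptotics plus the scaling $\lambda_{d,r}=\lambda_d/r^2$): Brownian scaling turns the expanding ball at time $t$ into the unit ball at time $s_t=t/r^2(t)\to\infty$, and the Dirichlet eigenfunction expansion with $\phi_1(0)>0$, $c_1>0$ gives the sharp exponential rate $\lambda_d$. The tail-sum justification you flag is indeed classical for the ball, so there is no gap.
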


The following result is well-known in the theory of branching processes. For a proof, see for example \cite[Section 8.11]{KT1975}.

\begin{propc}[Distribution of mass in branching systems] \label{prop3}
For a strictly dyadic continuous-time branching process $N=(N_t)_{t\geq 0}$ with constant branching rate $\beta>0$, the probability distribution at time $t$ is given by 
\begin{equation} P(N_t=k)=e^{-\beta t}(1-e^{-\beta t})^{k-1},\quad k\geq 1, \nonumber
\end{equation}
from which it follows that
\begin{equation} P(N_t>k)=(1-e^{-\beta t})^k  \label{eq02}.
\end{equation}
\end{propc}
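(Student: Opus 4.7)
The plan is to prove the distribution formula via the probability generating function (PGF) of $N_t$ and then deduce the tail formula by geometric summation. Let $F(s,t) = E[s^{N_t}]$; since the process starts with a single particle, $F(s,0) = s$. First I would derive a simple first-order ODE in $t$ for $F$.

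Conditioning on the first branching time $\tau \sim \mathrm{Exp}(\beta)$, and using that the two daughter sub-populations evolve as independent copies of the original process, one obtains the renewal equation $F(s,t) = e^{-\beta t} s + \int_0^t \beta e^{-\beta u} F(s, t-u)^2 \, du$. Differentiating in $t$ (equivalently, applying the Kolmogorov backward equation for the dyadic branching semigroup) gives the separable ODE
$$\partial_t F = \beta(F^2 - F), \qquad F(s,0) = s.$$
Solving by partial fractions, $1/[F(F-1)] = 1/(F-1) - 1/F$, yields $\log|(F-1)/F| - \log|(s-1)/s| = \beta t$, from which
$$F(s,t) = \frac{s\, e^{-\beta t}}{1 - s(1 - e^{-\beta t})}.$$

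Expanding the right-hand side as a geometric series in $s$ gives
$$F(s,t) = s\, e^{-\beta t} \sum_{k \geq 0} s^k (1-e^{-\beta t})^k = \sum_{k\geq 1} s^k e^{-\beta t}(1-e^{-\beta t})^{k-1},$$
and reading off the coefficient of $s^k$ produces the claimed formula for $P(N_t = k)$. For the tail, setting $p = e^{-\beta t}$ and $q = 1-p$, the geometric sum $P(N_t > k) = \sum_{j=k+1}^\infty p q^{j-1} = q^k = (1-e^{-\beta t})^k$ gives the second assertion.

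There is no substantive obstacle, as the result is classical and essentially amounts to recognizing $(N_t)_{t \geq 0}$ as a Yule process with per-capita splitting rate $\beta$. An alternative, purely probabilistic route would write $\{N_t \geq k+1\} = \{T_1 + \cdots + T_k \leq t\}$ with $T_i \sim \mathrm{Exp}(i\beta)$ independent (the $T_i$ being inter-jump times when the population is at level $i$), and compute the distribution of the sum; either route yields the claimed formula without difficulty.
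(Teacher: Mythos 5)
Your proof is correct: the paper does not prove Proposition~C itself but cites it as classical (referring to Karlin and Taylor, Sect.\ 8.11), and your PGF derivation via the backward equation $\partial_t F = \beta(F^2-F)$, together with the geometric-series expansion and tail summation, is exactly the standard argument behind that citation. Both the ODE solution and the identification of $N_t$ as a Yule process with geometric marginal of parameter $e^{-\beta t}$ check out, so nothing further is needed.
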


We now focus on the model of Poissonian traps in $\mathbb{R}^d$. Recall that a random environment in $\mathbb{R}^d$ is created via a PPP, called $\Pi$, with
$$K:=\bigcup_{x_i\in\,\text{supp}(\Pi)}\bar{B}(x_i,a)$$	
being the trap field attached to $\mathbb{R}^d$. 

\begin{definition} \label{def1}
A \emph{clearing} in the random environment $\omega$ is a trap-free region in $\mathbb{R}^d$, that is, $A\subseteq\mathbb{R}^d$ is a clearing if $A\subseteq K^c$. By a \emph{clearing of radius $r$}, we mean a ball of radius $r$ which is a clearing.
\end{definition}

The following result is Lemma 4.5.2 in \cite{S1998}. 

\begin{propd}
Let
\begin{equation}
R_0=R_0(d,\nu):=\left(\frac{d}{\nu \omega_d}\right)^{1/d}=\sqrt{\frac{\lambda_d}{c(d,\nu)}}. \label{eqr0}
\end{equation}
Then, on a set of full $\mathbb{P}$-measure, there exists $\ell_0=\ell_0(\omega)>0$ such that for each $\ell\geq\ell_0$ the cube $(-\ell,\ell)^d$ contains a clearing of radius 
\begin{equation}
R_\ell:=R_0(\log \ell)^{1/d}-(\log \log \ell)^2 ,\:\:\ell>1. \label{eqrell}
\end{equation}
\end{propd}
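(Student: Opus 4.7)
The plan is a classical first-moment / Borel--Cantelli argument that exploits the spatial independence of the PPP. First I would tile the cube $(-\ell,\ell)^d$ by $M_\ell$ disjoint open balls $B(y_i, R_\ell + a)$ with $M_\ell$ of order $(\ell/R_\ell)^d$, by placing the centers $y_i$ on a grid of spacing $2(R_\ell+a)$ inside $(-\ell + R_\ell + a,\,\ell - R_\ell - a)^d$, which is nonempty once $\ell$ is large. By the definition of $K$, the ball $B(y_i, R_\ell)$ is a clearing precisely when $\Pi(B(y_i, R_\ell + a)) = 0$, an event of $\mathbb{P}$-probability $q_\ell := \exp(-\nu\omega_d(R_\ell + a)^d)$. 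Because the enlarged balls are pairwise disjoint, these events are mutually independent.

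The key asymptotic is then obtained by Taylor-expanding $(R_0(\log\ell)^{1/d} - (\log\log\ell)^2 + a)^d$ about $R_0(\log\ell)^{1/d}$ and using $\nu\omega_d R_0^d = d$ from \eqref{eqr0}:
\begin{equation*}
\nu\omega_d(R_\ell + a)^d \;=\; d\log\ell \,-\, \frac{d^2}{R_0}(\log\log\ell)^2 (\log\ell)^{(d-1)/d}(1+o(1)),
\end{equation*}
so $q_\ell = \ell^{-d}\,e^{h(\ell)}$ for some function $h$ tending to infinity faster than any power of $\log\log\ell$. Writing $p_\ell$ for the probability that none of the $B(y_i, R_\ell)$ is a clearing, independence of the $M_\ell$ candidate events gives $p_\ell \leq (1 - q_\ell)^{M_\ell} \leq \exp(-M_\ell q_\ell)$; since $M_\ell q_\ell \gtrsim e^{h(\ell)}/\log\ell$ grows very fast, $p_\ell$ is summable over integer $\ell$. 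Borel--Cantelli applied to $\Pi$ yields a $\mathbb{P}$-full-measure set on which, for all sufficiently large integers $n$, the cube $(-n,n)^d$ contains a clearing of radius $R_n$.

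To pass from integers to arbitrary real $\ell$, I would run the preceding argument with $R_\ell$ replaced by $R_\ell + 1$; the extra additive constant is absorbed into the $(1+o(1))$ term in $h$, so every estimate goes through unchanged. For any real $\ell\geq 1$, setting $n := \lfloor \ell \rfloor$, one has $(-\ell,\ell)^d \supseteq (-n,n)^d$, and a direct computation shows $R_\ell - R_n \to 0$ as $n\to\infty$, so $R_n + 1 \geq R_\ell$ for all large $n$. Thus the clearing of radius $R_n + 1$ guaranteed inside $(-n,n)^d$ serves as a clearing of radius $R_\ell$ inside $(-\ell,\ell)^d$, establishing the conclusion with $\ell_0$ taken to be the first $\ell$ for which all the preceding estimates apply.

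The main obstacle is the tight calibration in the second step: the correction term $-(\log\log\ell)^2$ in the definition of $R_\ell$ is precisely what creates the blow-up factor $e^{h(\ell)}$ needed to overpower the $1/\log\ell$ loss from $M_\ell \sim \ell^d/(R_0^d\log\ell)$. Without any correction, $M_\ell q_\ell$ would be of order $1/\log\ell$, which is not summable, and Borel--Cantelli would fail; the exponent $2$ on $\log\log\ell$ provides ample slack. Carefully verifying that the Taylor remainder is genuinely $o(1)$ relative to the leading correction, and tracking how the fixed trap radius $a$ is absorbed into lower-order terms, is the only technically delicate piece of calculation.
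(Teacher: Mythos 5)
Your argument is correct and is essentially the approach the paper itself uses for the stronger Lemma~\ref{lemma1} (Proposition~D is only quoted from Sznitman): tile the cube with disjoint balls of radius $R_\ell+a$, use independence of the PPP on disjoint sets together with the expansion $\nu\omega_d(R_\ell+a)^d=d\log\ell-\frac{d^2}{R_0}(\log\log\ell)^2(\log\ell)^{(d-1)/d}(1+o(1))$ to make the failure probability summable, and finish with Borel--Cantelli plus interpolation between discrete scales. The only cosmetic differences are that you sum over integer $\ell$ instead of the dyadic scales $\ell=2^m$ (which suffices here since there is no $\lceil\ell^n\rceil$ factor) and that you track the trap radius $a$ explicitly.
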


We now prove a somewhat stronger version of Proposition D, which will be needed in the proof of the lower bound of Theorem~\ref{thm2} (see Section~\ref{section5}). For a Borel set $B$ and $x\in\mathbb{R}^d$, we define their sum in the sense of sum of sets as $x+B:=\{x+y:y\in B\}$. 
\begin{lemma}[Almost sure clearings]\label{lemma1}
Let $n\in\mathbb{N}$ and $a\in\mathbb{R}_+$ be fixed, and for $\ell>0$ let $x_1,\ldots,x_{\left\lceil\ell^n\right\rceil}$ be any set of $\left\lceil\ell^n\right\rceil$ points in $\mathbb{R}^d$. Define the cubes $C_{j,\ell}=x_j+(-\ell,\ell)^d$, $1\leq j\leq \left\lceil\ell^n\right\rceil$. Then, on a set of full $\mathbb{P}$-measure, there exists $\ell_0=\ell_0(\omega)>0$ such that for each $\ell\geq \ell_0$, each of $C_{1,\ell},C_{2,\ell},\ldots,C_{\left\lceil\ell^n\right\rceil,\ell}$ contains a clearing of radius $R_\ell+a$, where $R_\ell$ is as in \eqref{eqrell}.     
\end{lemma}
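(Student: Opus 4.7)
The plan is to sharpen the argument underlying Proposition D into a super-polynomial estimate on the failure probability for a single cube, then union-bound over the $\lceil\ell^n\rceil$ cubes, and invoke Borel-Cantelli.

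For the sharpened single-cube estimate, I would fix a generic cube $C$ of side $2\ell$ in $\mathbb{R}^d$ and tile its interior with a lattice of centers spaced $2(R_\ell + 2a)$ apart, producing $M_\ell \asymp (\ell/R_\ell)^d$ pairwise disjoint closed balls of radius $R_\ell + 2a$. A PPP-free ball of radius $R_\ell + 2a$ immediately yields a clearing of radius $R_\ell + a$ (its concentric inner open ball), since any point within the inner ball is at distance $> a$ from every PPP atom; and by the Poisson property, the emptiness events of the disjoint balls are mutually independent, with common probability $q_\ell = \exp(-\nu\omega_d(R_\ell + 2a)^d)$. Using $R_\ell = R_0(\log\ell)^{1/d}-(\log\log\ell)^2$ together with $\nu\omega_d R_0^d = d$ from \eqref{eqr0}, a direct expansion gives
\[
\nu\omega_d(R_\ell+2a)^d = d\log\ell - c_1(\log\log\ell)^2(\log\ell)^{(d-1)/d}\bigl(1+o(1)\bigr)
\]
for some $c_1>0$, so that $q_\ell M_\ell$ grows faster than any polynomial in $\log\ell$. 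Independence together with $(1-q_\ell)^{M_\ell} \leq e^{-q_\ell M_\ell}$ then yields $\mathbb{P}(C \text{ has no clearing of radius } R_\ell + a) = o(\ell^{-q})$ for every fixed $q>0$.

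By translation invariance of $\Pi$, this bound is uniform in the location of $C$ and applies in particular to each of $C_{1,\ell},\dots,C_{\lceil\ell^n\rceil,\ell}$. A union bound with $q=n+2$ shows that the probability that at least one $C_{j,\ell}$ lacks a clearing of radius $R_\ell + a$ is $o(\ell^{-2})$, which is summable over integer $\ell$. Borel-Cantelli then supplies a $\mathbb{P}$-full-measure set on which the failure event occurs for only finitely many integer $\ell$; the passage to real $\ell$ is a routine adaptation using the continuity in $\ell$ of the single-cube bound and the monotonicity of $R_\ell$.

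The main obstacle is the quantitative step: Proposition D gives only an a.s. existence statement, while here I need super-polynomial decay of the single-cube failure probability in order to survive the union bound over $\lceil\ell^n\rceil$ cubes. The $(\log\log\ell)^2$ slack built into the definition of $R_\ell$ is precisely what produces the $\exp(c_1(\log\log\ell)^2(\log\ell)^{(d-1)/d})$ factor in $q_\ell M_\ell$ and makes the scheme work; verifying that this factor dominates any polynomial in $\ell^n$ is the pivotal quantitative point.
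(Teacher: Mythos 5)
Your proposal is correct and follows essentially the same route as the paper: a quantitative single-cube failure estimate obtained by tiling the cube with order $(\ell/R_\ell)^d$ disjoint atom-free balls and using the $(\log\log\ell)^2$ slack in $R_\ell$, then a union bound over the $\lceil\ell^n\rceil$ cubes, Borel--Cantelli along a discrete sequence of scales, and interpolation to real $\ell$ (the paper runs Borel--Cantelli along $\ell=2^m$ with the enlarged radius $R_\ell+k$, $k=R_0\log 2+a$, precisely to absorb the increment $R_{2^{m+1}}-R_{2^m}$ and the extra cubes, which is the content of your ``routine adaptation''). Your requirement that balls of radius $R_\ell+2a$ be atom-free in order to produce trap-free balls of radius $R_\ell+a$ is, if anything, slightly more careful than the paper's treatment.
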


\begin{proof}
Let $x_1,x_2,\ldots$ be a sequence of points in $\mathbb{R}^d$, and $C_{j,\ell}:=x_j+(-\ell,\ell)^d$ for $j=1,2,\ldots$ For $k\geq 0$, let $A_{\ell,k}$ be the event that there is a clearing of radius $R_\ell+k$ in each $C_{1,\ell},C_{2,\ell},\ldots,C_{\left\lceil(2\ell)^n\right\rceil,\ell}$. Also, for $k\geq 0$, define
$$E_{\ell,k}=\{(-\ell,\ell)^d\:\:\text{contains a clearing of radius $R_\ell+k$}\}.$$
Due to the homogeneity of the PPP, it is clear that for all $x\in\mathbb{R}^d$ and $k>0$, 
$$\mathbb{P}\left(x+(-\ell,\ell)^d\:\:\text{contains a clearing of radius $R_\ell+k$}\right)=\mathbb{P}(E_{\ell,k}).$$
Then, the union bound gives
\begin{equation}
\mathbb{P}(A_{\ell,k}^c)\leq \left\lceil(2\ell)^n\right\rceil \mathbb{P}(E_{\ell,k}^c). \label{eq1lemma3}
\end{equation}
We now estimate $\mathbb{P}(E_{\ell,k}^c)$. Partition $(-\ell,\ell)^d$ into smaller cubes of side length $2(R_\ell+k)$. Then, a ball of radius $R_\ell+k$ can be inscribed in each smaller cube, and we can bound $\mathbb{P}(E_{\ell,k}^c)$ from above as 
\begin{equation}
\mathbb{P}(E_{\ell,k}^c)\leq \left[1-e^{-\nu\omega_d (R_\ell+k)^d}\right]^{\lfloor\ell/(R_\ell+k)\rfloor^d} 
\leq \exp\left[-\left\lfloor\frac{\ell}{R_\ell+k}\right\rfloor^d e^{-\nu\omega_d (R_\ell+k)^d}\right], \label{eq2lemma3}
\end{equation}
where we have used the estimate $1+x\leq e^x$.
Let 
$$\alpha_\ell:=\left\lfloor\frac{\ell}{R_\ell+k}\right\rfloor^d e^{-\nu\omega_d (R_\ell+k)^d}.$$
Then, using \eqref{eqrell}, and that $\log\lfloor\ell/(R_\ell+k) \rfloor\geq \log\frac{\ell}{2(R_\ell+k)}$, we obtain
\begin{align}
\log \alpha_\ell&\geq d\log \ell-d\log[2(R_0(\log \ell)^{1/d}-(\log \log \ell)^2+k)]-\nu\omega_d[R_0(\log \ell)^{1/d}-(\log \log \ell)^2+k]^d \nonumber \\
&=d\log \ell-d\log[2(R_0(\log \ell)^{1/d}-(\log \log \ell)^2+k)]-\nu\omega_d R_0^d \log\ell\left[1-\frac{(\log\log\ell)^2-k}{R_0(\log\ell)^{1/d}}\right]^d \nonumber \\
&\geq d\log \ell-d\log 2-d\log[R_0(\log \ell)^{1/d}-(\log \log \ell)^2+k] \nonumber \\
&\quad -d\log\ell+\frac{d^2}{R_0}(\log\ell)^{1-1/d}[(\log\log\ell)^2-k]-\frac{d^3}{2 R_0^2}(\log\ell)^{1-2/d}[(\log\log\ell)^2-k]^2 \nonumber \\
&\geq \frac{1}{2 R_0}(\log\log\ell)^2 \label{eq3lemma3}
\end{align}
for all large $\ell$, where we have used in the first inequality that $R_0^d=d/(\nu \omega_d)$, and that $(1-x)^n\leq 1-xn+(xn)^2/2$ for $x\in[0,1]$. It follows from \eqref{eq2lemma3} and \eqref{eq3lemma3} that for a given $k>0$, for all large $\ell$,
\begin{equation*}
\mathbb{P}(E_{\ell,k}^c)\leq e^{-\alpha_\ell}\leq e^{-\exp[(\log\log\ell)^2/(2 R_0)]}.
\end{equation*}
Take $\ell=2^m$ with $m\in\mathbb{N}$. Then, since for all sufficiently large $m$, $e^{-\alpha_{(2^m)}}\leq \exp\left(-m^{\frac{\log m}{2R_0}}\right)\leq e^{-m^2}$; this, along with \eqref{eq1lemma3} and \eqref{eq3lemma3} implies    
\begin{equation*}
\sum_{m=1}^\infty \mathbb{P}\left(A_{2^m,k}^c\right) \leq c(m_0)+\sum_{m=m_0}^\infty \left\lceil(2^n)^{m+1}\right\rceil e^{-m^2}<\infty, 
\end{equation*} 
where $c(m_0)$ is a constant that depends on $m_0$. Applying Borel-Cantelli lemma to the cubes $(-2^m,2^m)^d$, we conclude that with $\mathbb{P}$-probability one, only finitely many $A_{2^m,k}^c$ occur. That is, $\mathbb{P}(\Omega_0)=1$, where
\begin{equation} \label{eqlemma1}
\Omega_0=\{\omega:\exists m_0=m_0(\omega)\:\:\forall m\geq m_0,\:\:\text{each}\:\:C_{1,2^m},\ldots,C_{\left\lceil(2^{m+1})^n\right\rceil,2^m}\:\:\text{has a clearing of radius $R_{(2^m)}+k$} \} .
\end{equation}
Let $\omega_0\in\Omega_0$, and $m_0=m_0(\omega_0)$ be the `sufficiently large $m$' from \eqref{eqlemma1}. If we choose $k\geq a$, then to complete the proof, it suffices to show that in the environment $\omega_0$ for each $m\geq m_0$ and $2^m\leq \ell\leq 2^{m+1}$, each $C_{1,\ell},\ldots,C_{\left\lceil\ell^n\right\rceil,\ell}$ contains a clearing of radius $R_\ell+a$. Let $\ell\geq 2^{m_0}$ so that $2^m\leq \ell\leq 2^{m+1}$ for some $m\geq m_0$. Fix this integer $m$. Observe that
\begin{equation} 
R_{(2^{m+1})}-R_{(2^m)}\leq R_0(\log 2)^{1/d}\left[(m+1)^{1/d}-m^{1/d}\right]\leq R_0 \log 2 . \nonumber
\end{equation}
Choose $k=R_0\log 2+a$ (so far the choice of $k>0$ was arbitrary). Then, since $R_\ell$ is increasing in $\ell$ for large $\ell$, we have
\begin{equation} \label{tavsancik2}
R_\ell+a \leq R_{(2^{m+1})}+a \leq R_{(2^m)}+R_0\log 2+a = R_{(2^m)}+k .
\end{equation}
Furthermore, 
\begin{equation} \label{tavsancik3}
\left\lceil\ell^n\right\rceil \leq \left\lceil(2^{m+1})^n\right\rceil .
\end{equation}
Then, setting $\ell_0=2^{m_0}$, \eqref{eqlemma1}, \eqref{tavsancik2} and \eqref{tavsancik3} imply that for $\ell\geq \ell_0$, each of $C_{1,\ell},\ldots,C_{\left\lceil\ell^n\right\rceil,\ell}$ contains a clearing of radius $R_\ell+a$. This completes the proof since the choice of $\omega_0\in\Omega_0$ was arbitrary and $\mathbb{P}(\Omega_0)=1$.
\end{proof}

\section{Proof of Theorem~\ref{thm2}} \label{section5}

\subsection{Proof of the upper bound}

The following upper bound was proved in \cite[Section 6.1]{E2008} via a first moment argument, using \eqref{eqexp} and the Markov inequality. On a set of full $\mathbb{P}$-measure, say $\Omega_0$, for any $\varepsilon>0$,
\begin{equation} \label{equpperbound}
P^\omega\left((\log t)^{2/d}\left(\frac{\log N_t}{t}-\beta_2\right)+c(d,\nu)>\varepsilon\right)\leq \exp\left[-\varepsilon t(\log t)^{-2/d}+o\left(t(\log t)^{-2/d}\right)\right].
\end{equation}
To pass from \eqref{equpperbound} to the upper bound of the corresponding SLLN, we use a standard Borel-Cantelli argument. Recall that $\widehat{\Omega}$ is the sample space for the BBM. For $t>0$, define
$$Y_t:=(\log t)^{2/d}\left(\frac{\log N_t}{t}-\beta_2\right),$$
and let
$$\widehat{\Omega}_0:=\{\varpi\in\widehat{\Omega}:\forall\,\varepsilon>0\:\:\exists\,t_0=t_0(\varpi)\:\text{such that}\:\forall\,t\geq t_0,\:Y_t\leq -c(d,\nu)+\varepsilon \}.$$
Let $\omega\in\Omega_0$. We will show that $P^\omega(\widehat{\Omega}_0)=1$. For $n\in\mathbb{N}$, define
$$A_n:=\{Y_n>-c(d,\nu)+\varepsilon \}.$$
By \eqref{equpperbound}, there exists $n_0\in\mathbb{N}$ such that for all $n\geq n_0$, $P^\omega(A_n)\leq e^{-c(\varepsilon) n(\log n)^{-2/d}}$. Then,
\begin{equation} 
\sum_{n=1}^\infty P^\omega(A_n)=c+\sum_{n=n_0}^\infty P^\omega(A_n)\leq c+\sum_{n=n_0}^\infty e^{-c(\varepsilon) n(\log n)^{-2/d}}<\infty.  \nonumber
\end{equation}
By the Borel-Cantelli lemma, it follows that $P^\omega(A_n\:\text{occurs i.o.})=0$, where i.o.\ stands for \emph{infinitely often}. Choosing $\varepsilon=1/k$, this implies that for each $k\geq 1$, we have
$$P^\omega(\widehat{\Omega}_k)=1,\quad \widehat{\Omega}_k:=\{\varpi\in\widehat{\Omega}:\exists\,n_0=n_0(\varpi)\:\text{such that}\:\forall\,n\geq n_0,\:Y_n\leq -c(d,\nu)+1/k \}. $$
Since $P^\omega(\widehat{\Omega}_k)=1$ for each $k\geq 1$, we have $P^\omega(\widehat{\Omega}_0)=P^\omega(\cap_{k\geq 1}\widehat{\Omega}_k)=1$. Hence, on a set of full $\mathbb{P}$-measure,
\begin{equation} 
\underset{t\rightarrow\infty}{\limsup}\, (\log t)^{(2/d)}\left(\frac{\log N_t}{t}-\beta_2\right)\leq -c(d,\nu) \quad P^\omega\text{-a.s.} \nonumber
\end{equation}

\subsection{Proof of the lower bound} \label{lowerbound}

Let $\varepsilon>0$. We will find an upper bound for
\begin{equation} \label{goal}
P^\omega\left((\log t)^{2/d}\left(\frac{\log N_t}{t}-\beta_2\right)+c(d,v)<-\varepsilon\right)=P^\omega\left(N_t<\exp\left[t\left(\beta_2-\frac{c(d,v)+\varepsilon}{(\log t)^{2/d}}\right)\right]\right)  
\end{equation}
that is valid for large $t$ on a set of full $\mathbb{P}$-measure, and then use this upper bound along with the Borel-Cantelli lemma to pass to the corresponding SLLN.

The proof is split into four parts for better readability. The first three parts are based on a bootstrap argument, where in part one, we find an upper bound on $P(N_t<e^{\delta t})$ for $0<\delta<\beta_2$, and then use this upper bound in parts two and three to find a similar upper bound on \eqref{goal}. We follow a similar proof strategy as in \cite{E2008}. However, we are required to significantly improve the first and third parts of the corresponding proof in \cite{E2008} in order to extend the WLLN therein to SLLN, where the extra work is due to finding rates of decay to zero for the relevant probabilities as $t\rightarrow\infty$ as opposed to merely showing that they tend to zero.

In the first part of the proof, we use probabilistic arguments alone, including Theorem~\ref{thm1}, in contrast to the partial differential equations (PDE) approach used in \cite{E2008}. The main challenge is due to the possibility of $\beta_1=0$ (no branching inside the traps), which makes it difficult to show that even in the presence of mild obstacles the system produces exponentially many particles with `high' probability. It is possible to include the case $\beta_1=0$ here thanks to the probabilistic approach that we follow in the first part of the proof: we show that with `high' probability the BBM first finds a `good' point (see Lemma~\ref{lemma2}), which is the center of a large-enough clearing, and then produces exponentially many particles within this clearing. We emphasize that the case $\beta_1=0$ was not covered in \cite{E2008}, and the PDE approach used therein exploits the condition $\beta_1>0$. The second part of the proof is similar to that in \cite{E2008}; here, with minor further work, we find the rate of convergence to zero of the probability of the relevant unlikely event. The third part of the proof is an application of Theorem~\ref{thm1}, where we argue that with `high' probability sufficiently many particles are produced in a certain expanding clearing in $\mathbb{R}^d$, which exists in almost every environment. The fourth part of the proof uses a Borel-Cantelli argument along with the upper bound on \eqref{goal} from part three to obtain the lower bound of the SLLN in \eqref{eqthm2}.

\medskip

\noindent \textbf{\underline{Part 1}: Upper bound on exponentially few total mass}

In the first part of the proof, we will find an upper bound for
$$P^\omega(N_t<e^{\delta t})\quad \text{with} \:\: 0<\delta<\beta_2,$$
that is valid for large $t$ on a set of full $\mathbb{P}$-measure. The argument will be based on the following lemma of independent interest, which is on the hitting probability of a standard BM to clearings of a certain size. Recall that $X=(X_t)_{t\geq 0}$ denotes a standard BM in $d$ dimensions, and $\mathbf{P}_x$ is the law of $X$ started at position $x\in\mathbb{R}^d$. Also, recall the definition of $R_0$ from \eqref{eqr0}. 

\begin{lemma}[Hitting probability of BM to large clearings]\label{lemma2}
Let $r:\mathbb{R}_+\to\mathbb{R}_+$ be such that 
\begin{equation} \label{part1eqradius}
r(t)=\frac{R_0}{3}\left(\frac{1}{6d}\right)^{1/d}(\log t)^{1/d}, \quad t>1 . 
\end{equation}
For $\omega\in\Omega$ and $t>0$, define  
\begin{equation}
\Phi_t^\omega=\{x\in\mathbb{R}^d:B(x,r(t))\subseteq K^c(\omega)\} .  \nonumber
\end{equation}
Let $\mathbf{P}^\omega_x$ be the conditional law of $X$ started at position $x\in\mathbb{R}^d$ in the random environment $\omega$. Then, there exists $\Omega_1\subseteq\Omega$ with $\mathbb{P}(\Omega_1)=1$ such that for every $\omega\in\Omega_1$, there exists $t_0=t_0(\omega)$ such that for all $t\geq t_0$,
\begin{equation} 
\mathbf{P}^\omega_0\bigg(\bigg(\bigcup_{0\leq s\leq t} \{X(s)\} \bigg)\cap \Phi_t^\omega=\emptyset\bigg)\leq e^{-t^{1/3}} .  \nonumber
\end{equation}
\end{lemma}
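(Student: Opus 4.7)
The plan is to combine Lemma~\ref{lemma1} with a Markovian time-stepping argument that converts a uniform per-interval hitting estimate into the exponential bound via the strong Markov property of $X$.

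First I would fix $\alpha:=1/(6d\cdot 3^d)$ and set $\ell(t):=t^{\alpha}$, so that by \eqref{eqrell} together with the definition \eqref{part1eqradius} of $r$ we have $R_{\ell(t)}+a\geq r(t)$ for all large $t$. Tile $\mathbb{R}^d$ by the cubes $C_j:=2\ell(t)j+[-\ell(t),\ell(t))^d$, $j\in\mathbb{Z}^d$, and apply Lemma~\ref{lemma1} to those finitely many cubes whose center lies in $B(0,t)$ --- their number is polynomial in $t$, well within the $\lceil\ell(t)^n\rceil$ budget for $n$ chosen large enough in terms of $d$ and $\alpha$. This yields a set $\Omega_1\subseteq\Omega$ of full $\mathbb{P}$-measure on which, for every large $t$ and every such cube $C$, there is a point $y_C$ with $B(y_C,r(t))\subseteq K^c(\omega)\cap C$; in particular $y_C\in\Phi_t^\omega$.

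Next, fix $\omega\in\Omega_1$, set $M:=\lceil t^{1/3}\rceil$ and $\tau:=t/M\asymp t^{2/3}$, and let $s_k:=k\tau$ for $k=0,\ldots,M$. A standard Gaussian tail estimate handles the event $\{X(s)\notin B(0,\sqrt{t\log t})\text{ for some }s\in[0,t]\}$, whose probability is at most $t^{-c}$. The heart of the argument is then a uniform per-step hitting estimate: for some $p>0$, every $x\in B(0,\sqrt{t\log t})$, and every large $t$,
\[
\mathbf{P}_x^\omega\bigl(X(s)\in\Phi_t^\omega\text{ for some }s\in[0,\tau]\bigr)\geq p.
\]
Chaining this via the strong Markov property at $s_1,\ldots,s_{M-1}$ and combining with the tail bound yields
\[
\mathbf{P}_0^\omega\Bigl(\Bigl(\bigcup_{0\leq s\leq t}X(s)\Bigr)\cap\Phi_t^\omega=\emptyset\Bigr)\leq (1-p)^M+t^{-c},
\]
which sits below $e^{-t^{1/3}}$ for all large $t$ after enlarging $M$ to $\lceil p^{-1}t^{1/3}\rceil$, still preserving $\tau\asymp t^{2/3}$.

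The third and hardest step is proving the per-step hitting estimate. By the tiling, within distance $O(\ell(t))$ of $x$ there is at least one clearing center $y_C\in\Phi_t^\omega$, and within $B(x,\sqrt{\tau}/2)$ there are of order $(\sqrt{\tau}/\ell(t))^d=t^{d(1/3-\alpha)}$ of them. For $d=1$, recurrence of BM together with $\tau\gg\ell(t)^2$ makes even a single such target hit with probability $1-o(1)$; for $d=2$, the planar estimate $\mathbf{P}_x(\sigma_{B(y,r)}\leq\tau)\geq 1-c\log(|x-y|/r)/\log(\sqrt{\tau}/r)$ with $|x-y|\lesssim\ell(t)$, $r=r(t)$, $\tau\asymp t^{2/3}$ again gives a constant lower bound with a single target. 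The genuine difficulty is $d\geq 3$, where transience forces each individual clearing to be hit only with probability $\lesssim(r(t)/|x-y_C|)^{d-2}=o(1)$. Here I would use the full family of clearings: writing $N:=\sum_j\mathbf{1}\{X[0,\tau]\cap B(y_{C_j},r(t))\neq\emptyset\}$ with the sum restricted to cubes $C_j\subseteq B(x,\sqrt{\tau}/2)$, the classical one- and two-point hitting estimates for transient BM should give
\[
\mathbf{E}_x^\omega[N]\gtrsim t^{2/3-d\alpha}(\log t)^{(d-2)/d}\to\infty\quad\text{and}\quad \mathbf{E}_x^\omega[N^2]\lesssim\bigl(\mathbf{E}_x^\omega[N]\bigr)^2,
\]
so that Paley--Zygmund yields $\mathbf{P}_x^\omega(N\geq 1)\geq p>0$. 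Carrying out this second-moment computation uniformly in $x$ is the main technical obstacle of the argument; the rest is a clean concatenation of Lemma~\ref{lemma1} with standard BM facts.
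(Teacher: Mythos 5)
Your architecture (tile space with deterministic cubes, get clearings from Lemma~\ref{lemma1}, chain a per-step hitting estimate over $\asymp t^{1/3}$ time blocks) is sound in outline and genuinely different from the paper's, but as written the final bound fails at the confinement step. With containment radius $\sqrt{t\log t}$, Proposition A (with $k=\sqrt{\log t/t}$) gives $\mathbf{P}_0(\sup_{0\le s\le t}|X(s)|>\sqrt{t\log t})=t^{-1/2+o(1)}$, so your error term $t^{-c}$ is only \emph{polynomially} small and does not sit below $e^{-t^{1/3}}$; the sum $(1-p)^M+t^{-c}$ is dominated by $t^{-c}$. The repair is easy: confine to $B(0,t)$ (escape probability $e^{-t/2(1+o(1))}$), which still involves only polynomially many cubes, so Lemma~\ref{lemma1} applies. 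Two further points need attention. First, with $\alpha=1/(6d\cdot 3^d)$ you get $R_{\ell(t)}=\frac{R_0}{3}(1/(6d))^{1/d}(\log t)^{1/d}-(\log\log t^{\alpha})^2$, so $R_{\ell(t)}+a<r(t)$ for large $t$ because of the subtracted $(\log\log)^2$ term in \eqref{eqrell}; you need $\alpha$ strictly above the critical value. Second, the $d\ge 3$ per-step estimate is the heart of your argument and is only sketched: the first-moment count and the decoupling $\mathbf{E}^\omega_x[N^2]\lesssim(\mathbf{E}^\omega_x[N])^2$ require the standard finite-horizon hitting estimates for transient BM carried out uniformly in $x$, and you acknowledge this is the main obstacle. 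So the proposal is a plausible but incomplete alternative, with one concrete quantitative error to fix.

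For comparison, the paper sidesteps the capacity/second-moment computation entirely. It splits $[0,t]$ into $t/h(t)$ intervals with $h(t)=\sqrt{t}$ and centers reference balls $B(x_j,\rho(t))$, $\rho(t)=t^{1/(6d)}$, at the Brownian motion's own positions $x_j=X((j-1)h(t))$; Lemma~\ref{lemma1} supplies a clearing of radius $r(t)$ inside each such ball. Since $\rho(t)=o(\sqrt{h(t)})$, the walk exits $B(x_j,\rho(t))$ during its interval except on an event of probability $\le 2^{t/h(t)}q_0(t)^{t/(2h(t))}=e^{-t^{1/2}(1+o(1))}$; and conditional on exiting, the rotational invariance of the exit distribution from the concentric sphere through the clearing's center forces a hit of the clearing with probability at least $\kappa_d(r(t)/\rho(t))^{d-1}$, by a surface-area comparison. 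The per-interval success probability thus \emph{vanishes} polynomially, but there are $\sqrt{t}$ trials, giving $e^{-t^{1/2-(d-1)/(6d)}}\le e^{-t^{1/3}}$. The paper's route buys a dimension-uniform, elementary solid-angle bound in place of your Paley--Zygmund step, at the cost of invoking Lemma~\ref{lemma1} at random centers; your route keeps the cube centers deterministic but must pay for it with the transient hitting analysis in $d\ge 3$.
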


\begin{proof}
Introduce a time scale $h(t)$, and two different space scales $\rho(t)$ and $r(t)$, as follows. Let $h,\rho,r:\mathbb{R}_+\to\mathbb{R}_+$ satisfy:
\begin{enumerate}
	\item[(a)] $\lim_{t\rightarrow\infty}h(t)=\infty$ and $h(t)=o(t)$,
	\item[(b)] $\rho(t)=o(\sqrt{h(t)})$ and $\rho(t)=t^{d/(d+m)}$ for some $m\in\mathbb{N}$,
	\item[(c)] $r(t)=\frac{R_0}{3}[\log \rho(t)]^{1/d}$ for $t>1$.
\end{enumerate}
Later, we will choose $h(t)$ and $\rho(t)$, and hence $r(t)$, in a precise way so that $r(t)$ will be as in \eqref{part1eqradius}. 

First, we establish a suitable almost-sure environment in $\mathbb{R}^d$ with sufficient concentration of `large' clearings in $C(0,t):=[-t,t]^d$. Consider the simple cubic packing of $C(0,t)$ with balls of radius $\rho(t)/(2\sqrt{d})$. Then, at most
$$ n_t:=\left\lceil \frac{t}{\rho(t)/(2\sqrt{d})}  \right\rceil^d $$
balls are needed to completely pack $C(0,t)$, say with centers $\left(z_j:1\leq j\leq n_t\right)$. For each $j$, let $B^{j,t}=B(z_j,\rho(t)/(2\sqrt{d}))$. Now consider generically a simple cubic packing of $\mathbb{R}^d$ by balls $(\mathcal{B}_j:j\in\mathbb{N})$ of radius $R>0$, and let $x\in\mathbb{R}^d$ be any point. It is easy to see that $\min_j \max_{z\in \mathcal{B}_j}|x-z|<(\sqrt{d}/2)4R$, where $\sqrt{d}/2$ is the distance between the center and any vertex of the $d$-dimensional unit cube, i.e., $C(0,1/2)$. Then, since the packing ball radius is $\rho(t)/(2\sqrt{d})$ in our case, it follows that
\begin{equation}  \label{eq11}
\forall\,x\,\in C(0,t), \quad
\underset{1\leq j\leq n_t}{\min}\:\underset{z\in B^{j,t}}{\max}\:|x-z|<\rho(t). 
\end{equation}
Observe from the definition of $n_t$ that 
\begin{equation}  \label{eq120}
n_t\leq (3\sqrt{d})^d\left(\frac{t}{\rho(t)}\right)^d.
\end{equation}
Let $\ell=\rho(t)=t^{d/(d+m)}$. Then, $t=\ell^{(d+m)/d}$, and it follows from \eqref{eq120} that for all large $\ell$,
\begin{equation}
n_t\leq (3\sqrt{d})^d \ell^m \leq \ell^{m+1}.
\end{equation}
Now, with the choices $\ell=t^{d/(d+m)}$, $n=m+1$, and $x_j=z_j$ for $1\leq j\leq n_t$, where $(z_j:1\leq j\leq n_t)$ are as above, in view of \eqref{eq11} and since $\ell\to\infty$ as $t\to\infty$, Lemma~\ref{lemma1} implies the following. On a set of full $\mathbb{P}$-measure, there exists $t_0>0$ such that for all $t\geq t_0$, $B(x,\rho(t))$ contains a clearing of radius $R_{\rho(t)}$ for each $x\in C(0,t)$, where $R_{\rho(t)}$ is as in \eqref{eqrell}. That is, $\mathbb{P}(\Omega_1)=1$, where
\begin{equation} \label{eqenviron}
\Omega_1:=\{\omega\in\Omega:\exists\:t_0\:\:\forall\:t\geq t_0,\:\:\forall\:x\in C(0,t)\:\: \exists\:y\in B(x,\rho(t))\:\:\text{with}\: B\left(y,R_{\rho(t)}\right)\subseteq K^c\}. 
\end{equation}
It follows from \eqref{eqrell} and the requirement (c) that $2r(t)\leq R_{\rho(t)}$, and therefore for each $\omega\in\Omega_1$, for all large $t$ there is a clearing of radius $2r(t)$ inside any ball of radius $\rho(t)$ centered within $C(0,t)$. In this proof, we will use $\Omega_1$ as the almost-sure, i.e., \emph{quenched}, environment for the BM.  

Next, suppose that $t/h(t)$ is an integer for notational convenience\footnote{We would like to avoid the floor function in notation.}, and split $[0,t]$ into $t/h(t)$ pieces as 
$$[0,h(t)],\: [h(t),2h(t)],\:\ldots\:,[t-h(t),t].$$
For $j=1,2,\ldots,t/h(t)$, let
$$ x_j=X((j-1)h(t)),  $$
and define the intervals $I_{j,t}$ and the balls $B_{j,t}$, respectively, as
$$I_{j,t}=[(j-1)h(t),j h(t)],\quad B_{j,t}=B(x_j,\rho(t)).$$
We call $x\in\mathbb{R}^d$ a \emph{good point} for $\omega\in\Omega$ at time $t$ if $B(x,r(t))$ is a clearing (see Definition~\ref{def1}) in the random environment $\omega$. That is,
$$ \Phi_t^\omega=\{x\in\mathbb{R}^d:B(x,r(t))\subseteq K^c(\omega)\} $$
is the set of good points associated to the pair $(\omega,t)$. We now estimate the conditional probability that $X$ does not hit $\Phi_t^\omega$ up to a large time $t$ given that $\omega\in\Omega_1$. 

For $f\in C[0,t]$ and $0\leq a<b\leq t$, let $f_{[a,b]}=\{f(s):a\leq s\leq b\}$. Then, for $t>1$ and $1\leq j\leq t/h(t)$, define the events
$$E_{j,t}=\{X_{I_{j,t}}\cap \Phi_t^\omega=\emptyset\}, \quad G_{j,t}=\{|x_{j+1}-x_j|>h(t)\},$$
and let $E_t=\bigcap_{j=1}^{t/h(t)}E_{j,t}$. In words, $E_t$ is the event that $X$ does not hit a good point associated to $(\omega,t)$ over $[0,t]$, that is,
$$ E_t=\bigg\{\bigg(\bigcup_{0\leq s\leq t} \{X(s)\} \bigg)\cap \Phi_t^\omega=\emptyset\bigg\} . $$
Since a BM typically moves a distance of order $\sqrt{s}$ over a time period of length $s$, $G_{j,t}$ is an unlikely event for large $t$. Set $\mathbf{P}^\omega=\mathbf{P}^\omega_0$. Using that $E_t=\bigcap_{j=1}^{t/h(t)}E_{j,t}$, we estimate $\mathbf{P}^\omega(E_t)$ by applying repeated conditioning on $G_{j,t}^c$ at times $jh(t)$ for $1\leq j\leq t/h(t)-1$ and throwing away the rare events $G_{j,t}$ as
\begin{equation} \label{eq121}
\mathbf{P}^\omega(E_t)\leq \mathbf{P}^\omega(E_{1,t})\prod_{j=2}^{t/h(t)} \mathbf{P}^\omega\left(E_{j,t} \bigg| \bigcap_{k=1}^{j-1}(E_{k,t},G_{k,t}^c)\right)+\mathbf{P}^\omega(G_{1,t})+\sum_{j=2}^{t/h(t)}\mathbf{P}^\omega\left(G_{j,t} \bigg| \bigcap_{k=1}^{j-1}(E_{k,t},G_{k,t}^c) \right) .
\end{equation}
In the rest of the proof, we find a suitable upper bound on the right-hand side of \eqref{eq121}. 

Recall that $I_{j,t}=[(j-1)h(t),jh(t)]$ and $B_{j,t}=B(x_j,\rho(t))$ with $x_j=X((j-1)h(t))$. Let $q_0(t)$ be the probability that $X$ stays inside $B_{j,t}$ over the period $I_{j,t}$. From Proposition B, we have
\begin{equation} \label{eq122}
q_0(t)=\exp\left[-\frac{\lambda_d\,h(t)}{\rho^2(t)}(1+o(1))\right], \quad t\rightarrow\infty.   
\end{equation}
Let $\omega\in\Omega_1$ (see \eqref{eqenviron}), and choose $t$ large enough so that $t\geq t_0= t_0(\omega)$, where $t_0$ is as in \eqref{eqenviron}. Then, $B_{1,t}=B(0,\rho(t))$ contains a clearing of radius $2r(t)$, hence a ball of radius $r(t)$, say $\mathcal{B}_{1,t}$, that is entirely contained in $\Phi_t^\omega$. That is, $\mathcal{B}_{1,t}\subseteq \Phi_t^\omega \cap B_{1,t}$. Likewise, for each $j=2,\ldots,t/h(t)$, conditional on $\bigcap_{k=1}^{j-1}(E_{k,t},G_{k,t}^c)$, we have $|x_j|=|X((j-1)h(t))|<t$, and therefore by definition of $\Omega_1$, $B_{j,t}=B(x_j,\rho(t))$ contains a ball of radius $r(t)$, say $\mathcal{B}_{j,t}$, that is entirely contained in $\Phi_t^\omega$; that is, $\mathcal{B}_{j,t}\subseteq \Phi_t^\omega\cap B_{j,t}$. Now let $q_{1,j}(t)$ be the probability that $X$ doesn't hit $\mathcal{B}_{j,t}$ conditional on exiting $B_{j,t}$ over $I_{j,t}$. If $X$ is conditioned to exit $B_{j,t}=B(x_j,\rho(t))$ over $I_{j,t}$, over this same period it must also exit $B(x_j,\widehat{r}(t))$, where $\widehat{r}(t)$ is the distance between $x_j$ and the center of $\mathcal{B}_{j,t}$. Therefore, since the Brownian exit distribution out of a ball centered at the starting point has rotational invariance (even under the conditioning), by comparing the surface area of the $\widehat{r}(t)$-ball that intersects $\mathcal{B}_{j,t}$ to the total surface area of the $\widehat{r}(t)$-ball, and since $\widehat{r}(t)\leq \rho(t)$ for each $t>1$, we obtain   
\begin{equation} \label{eq123}
q_{1,j}(t)\leq 1-\frac{\kappa_d\, r^{d-1}(t)}{\rho^{d-1}(t)}=:q_1(t) \quad \text{for all $t>1$},
\end{equation} 
where $\kappa_d$ is a constant that only depends on the dimension $d$. Also, from Proposition A, we have
\begin{equation}  \label{eq124}
\mathbf{P}^\omega(G_{j,t})=\mathbf{P}^\omega(G_{1,t})\leq \mathbf{P}^\omega\left(\sup_{0\leq s\leq h(t)}|X(s)|>h(t)\right)\leq \exp\left[-\frac{1}{2}h(t)(1+o(1))\right].
\end{equation}
Now apply the Markov property of the Brownian path $(X(s))_{0\leq s\leq t}$ at times $h(t),2h(t),\ldots,t-h(t)$, and use \eqref{eq122}-\eqref{eq124} to continue the estimate in \eqref{eq121} as 
\begin{align} \label{eq125}
\mathbf{P}^\omega(E_t)&\leq [q_0(t)+q_1(t)]^{t/h(t)}+\frac{t}{h(t)}\mathbf{P}^\omega(G_{1,t})  \nonumber \\
&\leq \left[e^{-\frac{\lambda_d\,h(t)}{\rho^2(t)}(1+o(1))}+1-\frac{\kappa_d\, r^{d-1}(t)}{\rho^{d-1}(t)}\right]^{t/h(t)}+\frac{t}{h(t)}\exp\left[-\frac{1}{2}h(t)(1+o(1))\right] .
\end{align}
We now choose $h(t)$, $\rho(t)$, and $r(t)$ in a suitable way so as to keep $\mathbf{P}^\omega(E_t)$ sufficiently small in view of \eqref{eq125}, while respecting the previously stated requirements (a)-(c):  
$$ h(t)=\sqrt{t},\quad \rho(t)=t^{1/(6d)},\quad r(t)=\frac{R_0}{3}\left(\frac{1}{6d}\right)^{1/d}(\log t)^{1/d}, \quad t>1.$$
With these choices, since $\exp\left[-\frac{\lambda_d\,h(t)}{\rho^2(t)}\right] \leq \frac{(\kappa_d/2)r^{d-1}(t)}{\rho^{d-1}(t)}$ for all large $t$, it follows from \eqref{eq125} and the estimate $1+x\leq e^x$ that for all large $t$,
\begin{align} 
\mathbf{P}^\omega(E_t)&\leq \left[1-\frac{(\kappa_d/2)r^{d-1}(t)}{\rho^{d-1}(t)}\right]^{t/h(t)}+\frac{t}{h(t)}\exp\left[-\frac{1}{2}h(t)(1+o(1))\right] \nonumber \\
&\leq \exp\left[-\frac{\kappa_d}{2}\left(\frac{r(t)}{\rho(t)}\right)^{d-1}\frac{t}{h(t)}\right]+\frac{t}{h(t)}\exp\left[-\frac{1}{2}h(t)(1+o(1))\right] \nonumber \\
&=\exp\left[-\frac{\kappa_d}{2}\frac{c(R_0,d)(\log t)^{(d-1)/d}t^{1/2}}{t^{(d-1)/(6d)}}\right]+t^{1/2}\exp\left[-\frac{1}{2}t^{1/2}(1+o(1))\right] \nonumber \\ 
&\leq \exp\left[-t^{1/2-(d-1)/(6d)}\right]+\exp\left[-\frac{1}{3}t^{1/2}\right]\leq e^{-t^{1/3}}, \nonumber
\end{align}
where the last inequality follows since $\frac{1}{3}<\frac{1}{2}-\frac{d-1}{6d}$. Hence, we reach the following conclusion. There exists $\Omega_1\subseteq \Omega$ with $\mathbb{P}(\Omega_1)=1$ such that $\forall\,\omega\in\Omega_1$, $\exists\,t_0=t_0(\omega)$ such that $\forall\,t\geq t_0$,
\begin{equation} 
\mathbf{P}^\omega\left((\cup_{0\leq s\leq t}\{X(s)\})\cap \Phi_t^\omega=\emptyset\right)\leq e^{-t^{1/3}}. \nonumber
\end{equation}
\end{proof}

Next, we use Lemma~\ref{lemma2} to complete the first part of the proof of the upper bound of Theorem~\ref{thm2}. Recall that $0\leq \delta<\beta_2$. Choose $\alpha$ such that $0<\alpha<1-\delta/\beta_2$. Split the interval $[0,t]$ into two pieces as $[0,\alpha t]$ and $[\alpha t,t]$. We argue that with `high' probability, the BBM hits a good point, say $z_0\in\mathbb{R}^d$, associated to $(\omega,\alpha t)$ over $[0,\alpha t]$, and then the sub-BBM emanating from the particle that hits $z_0$ produces at least $e^{\delta t}$ particles over $[\alpha t,t]$ inside $B(z_0,r(\alpha t))$.

Let $Y_1=(Y_1(s))_{s\geq 0}$ be a randomly\footnote{One can choose an ancestral line \emph{randomly} as follows: start with the path of the initial particle from $t=0$ until it branches, and when it branches, pick one of the two offspring with probability $1/2$, and concatenate the previously traced path to the path of the chosen offspring until it, too, branches; repeat indefinitely the procedure of picking an offspring particle with probability $1/2$ upon branching and concatenating its path to the previously traced path.} chosen ancestral line of the BBM in the random environment $\omega$. Note that even under $P^\omega$, since branching and motion mechanisms are independent of each other, $(Y_1(s))_{s\geq 0}$ is identically distributed as a standard Brownian motion. The \emph{range} (accumulated support) of $Z$ is the process defined by 
\begin{equation} \label{range}
\mathcal{R}(t)=\bigcup_{0\leq s\leq t} \text{supp}(Z(s)).
\end{equation}
Since $Y_1$ is an ancestral line of $Z$, we have $\cup_{0\leq s\leq t}Y_1(s)\subseteq \mathcal{R}(t)$ for each $t\geq 0$. Then, since $Y_1$ is Brownian, Lemma~\ref{lemma2} implies that for $0<\alpha<1$, on a set of full $\mathbb{P}$-measure, say $\Omega_2$, for all large $t$,
\begin{equation} \label{eq208}
P^\omega(R(\alpha t)\cap \Phi_{\alpha t}^\omega=\emptyset)\leq e^{-\alpha^{1/3}t^{1/3}}.
\end{equation}
Observe that $\{\mathcal{R}(\alpha t)\cap \Phi_{\alpha t}^\omega=\emptyset\}$ is the event that $Z$ doesn't hit a good point associated to $(\omega,\alpha t)$ over $[0,\alpha t]$. 

Now let $\tau=\tau(\omega)=\inf\{s>0:\mathcal{R}(s)\cap \Phi_{\alpha t}^\omega\neq\emptyset\}$ be the first hitting time of $Z$ to $\Phi_{\alpha t}^\omega$. Let $Y_2$ be the ancestral line of $Z$ that first hits $\Phi_{\alpha t}^\omega$, and let $z_0=Y_2(\tau)$. Conditional on $\tau<\alpha t$, apply the strong Markov property at time $\tau$, and then apply Theorem~\ref{thm1} to the growth inside $B(z_0,r(\alpha t))$ of the sub-BBM initiated by $Y_2$ at time $\tau$. Note that $\mathcal{B}_t:=B(z_0,r(\alpha t))$ is a clearing in the random environment $\omega$ by definition of $\tau$, $z_0$ and $\Phi_{\alpha t}^\omega$. 

In detail, for $u\geq 0$, let $\big|Z_{[\tau,\tau+u]}^{\mathcal{B}_t}\big|$ denote the mass at time $\tau+u$ of the sub-BBM initiated at position $z_0$ and time $\tau$ by $Y_2$ with deactivation at $\partial\mathcal{B}_t$. Let $s:=(1-\alpha)t$, $\hat{r}:\mathbb{R}_+\to\mathbb{R}_+$ be such that
$$ \hat{r}(s)=\frac{R_0}{3}\left(\frac{1}{6d}\right)^{1/d}\left[\log\left(\frac{\alpha s}{1-\alpha}\right)\right]^{1/d} $$
for large $s$, and $B_s:=B(0,\hat{r}(s))$. Observe the equality of events $\{\tau\leq \alpha t\}=\{\mathcal{R}(\alpha t)\cap\Phi_{\alpha t}^\omega\neq\emptyset\}$, and that $t-\tau\geq (1-\alpha)t$ conditional on $\tau\leq \alpha t$, and $\hat{r}(s)=r(\alpha t)$. Then, on $\Omega_2$ with $\mathbb{P}(\Omega_2)=1$, applying the strong Markov property at $\tau=\tau(\omega)$, and taking $\gamma_s=\exp[-\sqrt{\beta_2/2}\,\hat{r}(s)]$ for instance, Theorem~\ref{thm1} implies that for all large $t$,
\begin{align} \label{eq209}
P^\omega\left(N_t<e^{\delta t} \big\vert R(\alpha t)\cap \Phi_{\alpha t}^\omega\neq\emptyset \right)&\leq 
P^\omega\left(\big | Z_{[\tau,t]}^{\mathcal{B}_t} \big |< e^{-\sqrt{\beta_2/2}\,\hat{r}(s)} e^{-\frac{\lambda_d s}{(\hat{r}(s))^2}(1+o(1))}e^{\beta_2 s}\right) \nonumber \\
& \leq  P^\omega\left(\big | Z_{[\tau,\tau+(1-\alpha)t]}^{\mathcal{B}_t} \big |< e^{-\sqrt{\beta_2/2}\,\hat{r}(s)} e^{-\frac{\lambda_d s}{(\hat{r}(s))^2}(1+o(1))}e^{\beta_2 s}\right) \nonumber \\
& = P\left(\big | Z_s^{B_s} \big |< e^{-\sqrt{\beta_2/2}\,\hat{r}(s)} e^{-\frac{\lambda_d s}{(\hat{r}(s))^2}(1+o(1))}e^{\beta_2 s}\right) \nonumber \\
&=e^{-\sqrt{\beta_2/2}\,\hat{r}(s)(1+o(1))},
\end{align} 
where, in the first inequality, we have used that $\delta t<(1-\alpha)\beta_2 t=\beta_2 s$ due to the choice $\alpha<1-\delta/\beta_2$, and in the first equality we have used that $\mathcal{B}_t$ is a clearing in $\omega$ followed by translation invariance. Then, in view of $\hat{r}(s)=r(\alpha t)$ and the definition of $r(t)$ from \eqref{part1eqradius}, we reach the following conclusion via \eqref{eq208} and \eqref{eq209}: on $\Omega_2\subseteq\Omega$ with $\mathbb{P}(\Omega_2)=1$,
\begin{equation} \label{eq210}
P^\omega\left(N_t<e^{\delta t}\right)\leq e^{-c(\log t)^{1/d}(1+o(1))},
\end{equation}
where $c=c(d,\nu,\beta_2,\delta)>0$. (The dependence of $c$ on $\nu$ is through $R_0$, which appears in the definition of $r(t)$; see \eqref{eqr0} and \eqref{part1eqradius}.) This gives a quenched upper bound on the probability that $N_t=|Z_t|$ is exponentially few, and completes the first part of the proof of the lower bound of Theorem~\ref{thm2}. 

\medskip

\noindent \textbf{\underline{Part 2}: Time scales within $[0,t]$ and moving a particle into a large clearing}

This part of the proof is not new; it is essentially taken from \cite{E2008} with minor improvements, where we also estimate the rate of decay to zero as $t\rightarrow\infty$ of the probabilities of the relevant unlikely events as opposed to merely showing that they tend to zero. Introduce two different time scales, $\ell(t)$ and $m(t)$, where $\ell(t)=o(m(t))$ and $m(t)=o(t)$, and split the interval $[0,t]$ into $[0,\ell(t)]$, $[\ell(t),m(t)]$ and $[m(t),t]$. More precisely, let $\ell,m:\mathbb{R}_+\to\mathbb{R}_+$ be two functions satisfying $\ell(t)< m(t)< t$ for all $t> 0$, and
\begin{enumerate}
	\item[(i)] $\lim_{t\rightarrow\infty}\ell(t)=\infty$,
	\item[(ii)] $\lim_{t\rightarrow\infty} \frac{\log t}{\log (\ell(t))}=1$,
	\item[(iii)] $\ell(t)=o(m(t))$,
	\item[(iv)] $m(t)=o(\ell^2(t))$,
	\item[(v)] $m(t)=o(t(\log t)^{-2/d})$.
\end{enumerate}
For concreteness, we fix the following choices of $\ell$ and $m$ that satisfy (i)-(v): let $\ell(t)$ and $m(t)$ be arbitrarily defined with $\ell(t)< m(t)< t$ for $t\in(0,e]$, and
$$\ell(t)=t^{1-1/(\log\log t)},\quad m(t)=t^{1-1/(2\log\log t)},\quad \text{for}\:\: t>e .$$ 

Firstly, using Part 1 of the proof, we prepare the setting at time $\ell(t)$.
Fix $\delta\in(0,\beta_2)$, and define
$$I(t)=\lfloor e^{\delta \ell(t)} \rfloor .$$
Recall the definition of $\mathcal{R}(t)$ from \eqref{range}, and for $t>0$, let
$$M_t:=\inf \{r\geq 0:\mathcal{R}(t)\subseteq B(0,r)\}.$$
Next, for $t>0$, define the families of events
\begin{equation*}
G_t:=\{N_{\ell(t)}\geq I(t)\},\quad H_t:=\{M_{\ell(t)}\leq\sqrt{2\beta_2+\varepsilon}\,\ell(t)\}. 
\end{equation*}
Recall that we write $Z_t(B)$ to denote the mass of $Z$ that fall inside $B$ at time $t$, and define further the family of events  
\begin{equation*}
F_t:=\left\{Z_{\ell(t)}\left(B(0,\sqrt{2\beta_2+\varepsilon}\,\ell(t))\right)\geq I(t)\right\}. 
\end{equation*}
Since $\lim_{t\rightarrow\infty}\ell(t)=\infty$, by \eqref{eq210}, on a set of full $\mathbb{P}$-measure, which we had called $\Omega_2$,
\begin{equation}\label{eq301}
P^\omega(G_t^c)=P^\omega\left(N_{\ell(t)}<\lfloor e^{\delta \ell(t)} \rfloor\right)\leq e^{-c(\log \ell(t))^{1/d}(1+o(1))}
\end{equation}
with $c=c(d,\nu,\beta_2,\delta)>0$. Next, we establish some control on the spatial spread of the BBM at time $\ell(t)$. Observe that $M_t/t$ is a measure of the spread of $Z$ over $[0,t]$. As before, let $\mathcal{N}_t$ denote the set of particles of $Z$ that are alive at time $t$, and for $u\in\mathcal{N}_t$, let $(Y_u(s))_{0\leq s\leq t}$ denote the ancestral line up to $t$ of particle $u$. Note that $N_t=|\mathcal{N}_t|$. Then, using the union bound, for $\gamma>0$,
\begin{equation} \label{eq302}
P\left(M_t>\gamma t\right)= P\left(\exists u\:\in \mathcal{N}_t:\sup_{0\le s\le t}|Y_u(s)|>\gamma t\right) \le E[N_t]\:\mathbf{P}_0\left(\sup_{0\le s\le t}|X(s)|>\gamma t\right). 
\end{equation}
Here, we use $P$ for the law of a standard BBM in $\mathbb{R}^d$ with constant binary branching rate $\beta_2$ everywhere. It is a standard result that $E[N_t]=\exp(\beta_2 t)$ (one can deduce this, for example, from Proposition C), and we know from Proposition A that $\mathbf{P}_0\left(\sup_{0\le s\le t}|X(s)|>\gamma t\right)=\exp[-\gamma^2 t/2(1+o(1))]$. Moreover, the following stochastic domination is clear: for all $B\subseteq\mathbb{R}^d$ Borel, all $k\in\mathbb{N}$, and $t\geq 0$,
\begin{equation} \label{eq303} 
P(Z_t(B)<k)\leq P^\omega(Z_t(B)<k) \quad \text{for each $\omega\in\Omega$}.
\end{equation}
Then, taking $B=\left(B(0,\sqrt{2\beta+\varepsilon}\,\ell(t))\right)^c$ and $k=1$ in \eqref{eq303}, and $\gamma=\sqrt{2\beta_2+\varepsilon}$ in \eqref{eq302}, and replacing $t$ by $\ell(t)$ in both \eqref{eq302} and \eqref{eq303}, it follows that on $\Omega$, for any $\varepsilon>0$,
\begin{equation} \label{eq304}
P^\omega(H_t^c)=P^\omega\left(M_{\ell(t)}>\sqrt{2\beta_2+\varepsilon}\,\ell(t)\right)\leq e^{-\frac{\varepsilon \ell(t)}{2}(1+o(1))}.
\end{equation} 
Since $G_t\cap H_t \subseteq F_t$, we have $P^\omega(F_t^c)\leq P^\omega(G_t^c)+P^\omega(H_t^c)$, which, in view of \eqref{eq301} and \eqref{eq304} implies that on $\Omega_2$,
\begin{equation} \label{firstestimate}
P^\omega(F_t^c)\leq e^{-c(\log \ell(t))^{1/d}(1+o(1))}, \quad\quad c=c(d,\nu,\beta_2,\delta)>0.
\end{equation} 
This means, on a set of full $\mathbb{P}$-measure, with `high' $P^\omega$-probability, there are at least $I(t)$ particles in $B(0,\sqrt{2\beta_2+\varepsilon}\,\ell(t))$ at time $\ell(t)$ for large $t$.

Next, we prepare the setting at time $m(t)$. Recall \eqref{eqrell} and define 
\begin{equation} \label{eqbigradius}
R(t)=R_{\ell(t)}=R_0[\log \ell(t)]^{1/d}-[\log\log\ell(t)]^2, \quad\text{for}\:\:t>e^e.
\end{equation}
Since $\lim_{t\rightarrow\infty}\ell(t)=\infty$, Lemma~\ref{lemma1} implies that on a set of full $\mathbb{P}$-measure, say $\Omega_3$, there is a clearing $B(x_0,R(t)+1)$ such that $|x_0|\leq\ell(t)$ for all large $t$. Let $\omega\in\Omega_2\cap\Omega_3$. Conditional on the event $F_t$, the distance between $x_0$ and each of the at least $I(t)$ many particles in $B(0,\sqrt{2\beta_2+\varepsilon}\,\ell(t))$ at time $\ell(t)$ is at most
$$(1+\sqrt{2\beta_2+\varepsilon})\ell(t) .$$ 
A Brownian particle present at time $\ell(t)$ in $B(0,\sqrt{2\beta_2+\varepsilon}\,\ell(t))$ moves to $B(x_0,1)$ over $[\ell(t),m(t)]$ with probability at least
$$q_t=\exp\left[-\frac{[(1+\sqrt{2\beta_2+\varepsilon})\ell(t)]^2}{2[m(t)-\ell(t)]}(1+o(1))\right], $$
which follows from (iii) and (iv), along with the Brownian transition density. Apply the Markov property of the BBM at time $\ell(t)$, and neglect possible branching of particles over $[\ell(t),m(t)]$ for an upper bound on the probability of $C_t^c$, where
\begin{equation}
C_t:=\{Z_{m(t)}(B(x_0,1))>0\}. \label{eqct}
\end{equation}
Observe that conditional on the event $F_t$, the event $C_t$ is realized if one of the sub-BBMs initiated by each of the at least $I(t)$ many particles present in $B(0,\sqrt{2\beta_2+\varepsilon}\,\ell(t))$ at time $\ell(t)$ contributes a particle to $B(x_0,1)$ at time $m(t)$. Therefore, by the independence of particles present at time $\ell(t)$, we have
\begin{equation} \label{eq305}
P^\omega(C_t^c \mid F_t)\leq (1-q_t)^{I(t)}=e^{-q_t I(t)},
\end{equation}
where we have used the estimate $1+x\leq e^x$. Since (iii) implies that $\frac{\ell^2(t)}{m(t)}=o(\ell(t))$, we have
$$q_t e^{\delta \ell(t)}=\exp[\delta\ell(t)(1+o(1))].$$
Then, it follows from \eqref{eq305} that for all large $t$,
\begin{equation} \label{eq306}
P^\omega(C_t^c \mid F_t)\leq \exp\left[-e^{\delta \ell(t)(1+o(1))}\right] \leq e^{-t^2},
\end{equation}
where we have used that by choice, $\ell(t)=t^{1-1/(\log\log t)}$. Note that \eqref{eq306} is superexponentially small in $t$. Thus far, the value of $\delta\in(0,\beta_2)$ was arbitrary. For the rest of the argument, the exact value of $\delta$ has no importance; therefore let us now fix it as $\delta=\beta_2/2$. Then, it follows from \eqref{firstestimate}, \eqref{eq306}, and assumption (ii) that on $\Omega_0:=\Omega_2 \cap \Omega_3$ with $\mathbb{P}(\Omega_0)=1$, 
\begin{equation} \label{secondestimate} 
P^\omega(C_t^c)\leq P^\omega(C_t^c \mid F_t) + P(F_t^c)\leq e^{-c(\log t)^{1/d}(1+o(1))}, \quad\quad c=c(d,\nu,\beta_2)>0.
\end{equation}
This means, on a set of full $\mathbb{P}$-measure, with `high' $P^\omega$-probability, there is at least one particle of $Z$ inside $B(x_0,1)$ at time $m(t)$ for large $t$, where $|x_0|\leq \ell(t)$. Let us generically call this particle $v$, and denote by $y_0:=X_v(m(t))$ its position at time $m(t)$.

\medskip

\noindent \textbf{\underline{Part 3}: BBM in the large expanding clearing}

Let $\omega\in\Omega_0$, and recall that $B(x_0,R(t)+1)$ is a clearing in $\omega$. In this part of the proof, we will work under the law $P^\omega(\:\cdot\: \mid C_t)$, where $C_t$ was defined in \eqref{eqct}. Conditional on the event $C_t$, we show that $B_t=B(y_0,R(t))$ is a large enough expanding clearing in $\omega$ in which the BBM can produce sufficiently many particles. In particular, we study the evolution of the sub-BBM initiated by $v$ at time $m(t)$ over the period $[m(t),t]$ within the expanding clearing $B_t$. Denote this sub-BBM by $\widehat{Z}$. We will use $P_x^\omega$ for the law of a BBM started with a single particle at $x\in\mathbb{R}^d$ in the random environment $\omega$. For $t>0$ and $\varepsilon>0$, define
\begin{equation} \label{newneweq}
A_{t,\varepsilon}:=\left\{N_t<\exp\left[t\left(\beta_2-\frac{c(d,v)+\varepsilon}{(\log t)^{2/d}}\right)\right] \right\}.
\end{equation}
Recall that our goal (see \eqref{goal}) is to find a suitable upper bound on $P^\omega(A_{t,\varepsilon})$. 

Define $\widehat{R}:\mathbb{R}_+\to\mathbb{R}_+$ such that $\widehat{R}(t-m(t))=R(t)$ for all large $t$. (By the choice of $m(t)$, $t-m(t)$ is increasing on $t\geq t_0$ for some $t_0>0$. Therefore, $t_1-m(t_1)=t_2-m(t_2)$ implies that $t_1=t_2$ for $t_1\wedge t_2\geq t_0$). Next, let $s:=t-m(t)$, $\widehat{B}_s:=B(y_0,\widehat{R}(s))$ and
\begin{equation} 
p_s:=\mathbf{P}_{y_0}(\sigma_{\widehat{B}_s}\geq s)=\mathbf{P}_{0}(\sigma_{B(0,\widehat{R}(s))}\geq s), \nonumber
\end{equation}    
where, as before, $\mathbf{P}_x$ denotes the law of a standard BM started at $x\in\mathbb{R}^d$. By the Markov property of $Z$ applied at time $m(t)$, $\widehat{Z}$ is a BBM started with a single particle at $y_0$. Noting that $\widehat{B}_s$ is a clearing in $\omega$ for all large $s$, and taking $\gamma_s=\exp[-\sqrt{\beta_2/2}\,\widehat{R}(s)]$, Theorem~\ref{thm1} implies that 
\begin{align}  
P^\omega\left(|\widehat{Z}_s|< e^{-\sqrt{\beta_2/2}\,\widehat{R}(s)} p_s e^{\beta_2 s} \:\big|\: C_t \right)&\leq P_{y_0}^\omega\left(\big| Z_s^{\widehat{B}_s}\big|< e^{-\sqrt{\beta_2/2}\,\widehat{R}(s)} p_s e^{\beta_2 s}\right) \nonumber \\
&=\exp\left[-\sqrt{\beta_2/2}\,\widehat{R}(s)(1+o(1))\right]. \label{eq309}
\end{align}  
By Proposition B, \eqref{eqbigradius}, and since $\widehat{R}(s)=R(t)$ and $c(d,v)=\lambda_d/R_0^2$,  
\begin{equation} \label{eq310}
p_s=\exp\left[-\frac{\lambda_d s}{\widehat{R}^2(s)}(1+o(1))\right]=\exp\left[-\frac{c(d,v)(t-m(t))}{(\log \ell(t))^{2/d}}(1+o(1))\right]. 
\end{equation}
For two functions $f,g:\mathbb{R}_+\to\mathbb{R}_+$, use $f(t)\sim g(t)$ to express that $\lim_{t\rightarrow\infty}f(t)/g(t)=1$. Then, it follows from assumptions $(ii)$ and $(v)$ that
$$\frac{t-m(t)}{(\log \ell(t))^{2/d}} \sim \frac{t}{(\log t)^{2/d}},$$
by which, we can continue \eqref{eq310} with
\begin{equation} \label{eq311}
p_s=\exp\left[-\frac{c(d,v) t}{(\log t)^{2/d}}(1+o(1))\right].
\end{equation}
Furthermore, using that $s=t-m(t)$, we have for any $\varepsilon>0$,
\begin{equation} \nonumber
\exp\left[t\left(\beta_2-\frac{c(d,v)+\varepsilon}{(\log t)^{2/d}}\right)\right]=\exp\left[\beta_2 s+\beta_2 m(t)-\frac{(c(d,v)+\varepsilon)t}{(\log t)^{2/d}}\right] \leq e^{-\sqrt{\beta_2/2}\,\widehat{R}(s)} p_s e^{\beta_2 s}   
\end{equation}
for all large $t$, where we have used \eqref{eq311}, assumption (v), and that $\widehat{R}(s)=R(t)=o(t(\log t)^{-2/d})$ in passing to the inequality. It is clear that $N_t\geq |\widehat{Z}_{t-m(t)}|=|\widehat{Z}_s|$. Then, it follows from \eqref{eq309} that for all large $t$,
\begin{align} 
P^\omega\left(N_t<\exp\left[t\left(\beta_2-\frac{c(d,v)+\varepsilon}{(\log t)^{2/d}}\right)\right] \:\bigg|\:C_t\right)&\leq P^\omega\left(|\widehat{Z}_s|<e^{-\sqrt{2\beta_2}\widehat{R}(s)} p_s e^{\beta_2 s}\:\big|\:C_t\right)   \nonumber \\
&\leq \exp\left[-\sqrt{2\beta_2}\widehat{R}(s)(1+o(1))\right].  \nonumber
\end{align}
Finally, using assumption (ii), \eqref{eqbigradius} along with $\widehat{R}(s)=R(t)$, and \eqref{secondestimate}, we reach the following conclusion. On $\Omega_0$, which is a set of full $\mathbb{P}$-measure, for any $\varepsilon>0$,
\begin{equation} \label{eq313}
P^\omega\left(N_t<\exp\left[t\left(\beta_2-\frac{c(d,v)+\varepsilon}{(\log t)^{2/d}}\right)\right]\right)\leq \exp\left[-c(\log t)^{1/d}(1+o(1))\right],
\end{equation}
where $c=c(d,\nu,\beta_2)>0$. In the next part of the proof, we will exploit the fact that the right-hand side of \eqref{eq313} does not depend on $\varepsilon$.

\medskip

\noindent \textbf{\underline{Part 4}: Borel-Cantelli argument} 

We will show that on a set of full $\mathbb{P}$-measure, for any $\varepsilon>0$,
\begin{equation} \label{eqlowerbound}
\underset{t\rightarrow\infty}{\liminf}\, (\log t)^{2/d}\left(\frac{\log N_t}{t}-\beta_2\right)\geq -\left[c(d,v)+\varepsilon\right] \quad P^\omega\text{-a.s.}
\end{equation}
Recall the definition of $A_{t,\varepsilon}$ from \eqref{newneweq}. It follows from \eqref{eq313} that there exists $c=c(d,\nu,\beta_2)/2$, independent of $\varepsilon$, such that on $\Omega_0$, for all large $t$,
\begin{equation} \label{eq314}
P^\omega\left(A_{t,\varepsilon/2}\right)\leq e^{-c(\log t)^{1/d}}.  
\end{equation}
Define the function $f:\mathbb{N}\to\mathbb{R}_+$ by 
\begin{equation}\label{eqfk}
f(k)=\exp\left[\left(\frac{2}{c}\right)^d(\log k)^d\right].
\end{equation}
Take $\omega\in\Omega_0$. By the choice of $f(k)$ and \eqref{eq314}, there exist constants $c_0>0$ and $k_0>0$ such that
\begin{equation*}
\sum_{k=1}^\infty P^\omega\left(A_{f(k),\varepsilon/2}\right)\leq c_0+\sum_{k=k_0}^\infty e^{-c(\log f(k))^{1/d}} = c_0+\sum_{k=k_0}^\infty\frac{1}{k^2}<\infty.
\end{equation*}
Then, by the Borel-Cantelli lemma, on a set of full $P^\omega$-measure, only finitely many events $A_{f(k),\varepsilon/2}$ occur. That is, for each $\omega\in\Omega_0$ there exists $\widehat{\Omega}_0\subseteq\widehat{\Omega}$ such that 
\begin{equation} \label{eq315}
P^\omega(\widehat{\Omega}_0)=1,\quad \widehat{\Omega}_0=\left\{\varpi\in\widehat{\Omega}:\exists\:k_0(\varpi)\:\:\forall\,k\geq k_0\:\:N_{f(k)}\geq \exp\left[f(k)\left(\beta_2-\frac{c(d,v)+\varepsilon/2}{(\log f(k))^{2/d}}\right)\right] \right\} .
\end{equation}
To prove \eqref{eqlowerbound}, it suffices to show that for each $\varpi\in\widehat{\Omega}_0$, 
\begin{equation} \label{eq316}
N_s \geq \exp\left[s\left(\beta_2-\frac{c(d,v)+\varepsilon}{(\log s)^{2/d}}\right)\right],\quad f(k)<s<f(k+1)
\end{equation} 
for all large $k$. Indeed, \eqref{eq315} and \eqref{eq316} would together imply \eqref{eqlowerbound} on $\Omega_0$. Observe that $N_s$ is $P^\omega$-almost surely increasing in $s$, and the right-hand side of \eqref{eq316} is also increasing in $s$ for all large $s$. Therefore, to prove \eqref{eq316}, it suffices to show that for all large $k$,
\begin{equation} \label{eq317}
N_{f(k)} \geq \exp\left[f(k+1)\left(\beta_2-\frac{c(d,v)+\varepsilon}{(\log f(k+1))^{2/d}}\right)\right].
\end{equation}
Next, we control $f(k+1)-f(k)$. Using \eqref{eqfk}, it can be shown that as $k\rightarrow\infty$,
\begin{equation} \label{eqfk2}
f(k+1)-f(k) \sim d\left(\frac{2}{c}\right)^d f(k)\frac{(\log k)^{d-1}}{k},
\end{equation} 
and that if we set $t=f(k)$, then
\begin{equation} \label{eqfk3}
k=\exp\left[\frac{c}{2}(\log t)^{1/d}\right].
\end{equation}
Then, setting $g(t)=f(k+1)-f(k)$, it follows from \eqref{eqfk2} and \eqref{eqfk3} that
\begin{equation}  \label{eqfk4}
g(t) \sim d\left(\frac{2}{c}\right)^d t\, \frac{(c/2)^{d-1}(\log t)^{(d-1)/d}}{\exp\left[\frac{c}{2}(\log t)^{1/d}\right]}.
\end{equation}
Using \eqref{eqbigradius} and \eqref{eqfk4}, it can be shown that
\begin{equation*} 
\underset{t\rightarrow\infty}{\lim}\frac{g(t) R^2(t)}{t}=0.
\end{equation*} 
This implies that for all large $k$,
\begin{equation*} 
f(k+1)-f(k)\leq \frac{\varepsilon f(k)}{2\beta_2 [\log f(k)]^{2/d}},
\end{equation*}
which further implies that
\begin{equation} \label{eq319}
\beta_2 f(k)-\frac{c(d,v)+\varepsilon/2}{[\log f(k)]^{2/d}}f(k) \geq \beta_2 f(k+1)-\frac{c(d,v)+\varepsilon}{[\log f(k+1)]^{2/d}}f(k+1)
\end{equation}
since $f(z)/[\log f(z)]^{2/d}$ is increasing for large $z$. As \eqref{eq319} implies \eqref{eq317} on $\widehat{\Omega}_0$, this proves \eqref{eqlowerbound}, and hence completes the proof of the lower bound of Theorem~\ref{thm2}.

\section{Proof of Theorem~\ref{thm1}}\label{section4}

Recall that by assumption $r:\mathbb{R}_+ \to \mathbb{R}_+$ satisfies $r(t)\to\infty$ as $t\to\infty$ and $r(t)=o(\sqrt{t})$. Also, we set $B_t=B(0,r(t))$ for $t\geq 0$. Throughout this section, we will use that the law of $|Z_t^{B_t}|$ is the same as the law of number of particles of $Z$ which are present at $t$ and whose ancestral lines over $[0,t]$ have been confined to $B_t$.  

\subsection{Proof of the lower bound}

Suppose that 
$$\gamma_t=e^{-\kappa r(t)}, \quad \text{where} \:\: \kappa>0.$$
Consider the joint strategy of suppressing the branching over $[0,f(t)]$, and then letting the BBM evolve `normally' in the remaining interval $[f(t),t]$. To be precise, recall that $n_t:=|Z_t^{B_t}|$, $\sigma_A$ denotes the first exit time out of $A$, and $p_t:=\mathbf{P}_0(\sigma_{B_t}\geq t)$; and let $f:\mathbb{R}_+\to\mathbb{R}_+$ satisfy $f(t)=o(t)$. For $t>0$ define the events
$$ A_t=\{N_{f(t)}=1\}, \quad E_t=\{n_t<\gamma_t p_t e^{\beta t} \}.$$
Estimate
\begin{equation} \label{eq20}
P(E_t)\geq P(E_t \cap A_t)=P(E_t \mid A_t) P(A_t).
\end{equation}
We will show that $P(E_t^c \mid A_t)$ tends to a constant smaller than one as $t\rightarrow\infty$ for suitable $f$. Let $(Y_1(s))_{0\leq s\leq \tau_1}$ be the path of the initial particle, where $\tau_1$ denotes the particle's lifetime. Conditional on $A_t$, it is clear that $\tau_1\geq f(t)$, and that $n_t=0$ if $Y_1(z)\notin B_t$ for some $0\leq z \leq f(t)$. Next, for $t>0$ define
$$ D_t=\{ Y_1(z)\in B_t\:\:\forall\, 0\leq z \leq f(t) \}  .$$
Then, 
\begin{equation} \label{eq21}
E\left[n_t \mid A_t\right]= E\left[n_t \mathbbm{1}_{D_t} \mid A_t\right] + E\left[n_t \mathbbm{1}_{D_t^c} \mid A_t\right]= E\left[n_t \mid A_t, D_t\right] P(D_t \mid A_t),
\end{equation}
where the second term on the right-hand side vanishes. Write
\begin{equation} \label{eq021}
E\left[n_t \mid A_t, D_t\right]= \int_{B_t} E\left[n_t \mid A_t, D_t, Y_1(f(t))=y\right]P(Y_1(f(t)) \in dy \mid A_t,D_t) .
\end{equation}
Define
\begin{equation} \label{neweq3}
\widetilde{p}^{(t)}(x,s,dy):=\mathbf{P}_x(X(s)\in dy \mid X(z)\in B_t \:\:\forall\, 0\leq z\leq s) \quad \text{and} \quad p_{s,x}^t:=\mathbf{P}_x(\sigma_{B_t}\geq s). 
\end{equation}
Applying the Markov property of a standard BM at time $s$ with $0<s<t$ gives
\begin{equation} \label{neweq4}
p_t=p_{s,0}^t \int_{B_t} p_{t-s,y}^t\, \widetilde{p}^{(t)}(0,s,dy).
\end{equation}
Furthermore, it follows from \eqref{eq21} and \eqref{eq021} that
\begin{equation} \label{eq22}
E\left[n_t \mid A_t\right]= p_{f(t),0}^t \int_{B_t} E\left[n_t \mid A_t, D_t, Y_1(f(t))=y\right] \widetilde{p}^{(t)}(0,f(t),dy) .
\end{equation} 
Now apply the Markov property of BBM at time $f(t)$, and use the many-to-one lemma (see for instance \cite[Lemma 1.6]{E2014}) to obtain
\begin{equation} \label{eq022}
E\left[n_t \mid A_t, D_t, Y_1(f(t))=y\right] = p^t_{t-f(t),y}\, e^{\beta(t-f(t))}  ,\quad y\in B_t . 
\end{equation}
Using \eqref{neweq4} with $s$ therein replaced by $f(t)$, it then follows from \eqref{neweq4}, \eqref{eq22} and \eqref{eq022} that
\begin{align} 
E\left[n_t \mid A_t\right]&= e^{\beta(t-f(t))} p_{f(t),0}^t \int_{B_t} p^t_{t-f(t),y}\, \widetilde{p}^{(t)}(0,f(t),dy) \nonumber \\
&=  e^{\beta(t-f(t))} p_t . \nonumber
\end{align}
Then, by the Markov inequality,   
\begin{equation} \label{eq23}
P(E_t^c \mid A_t)\leq \frac{E\left[n_t \big\vert A_t\right]}{\gamma_t p_t e^{\beta t}}=\gamma_t^{-1}e^{-\beta f(t)}.
\end{equation}
Choose $f(t)=-(1/\beta)\log ((1-\delta)\gamma_t)$, where $0<\delta<1$. With this choice of $f$, \eqref{eq23} implies that $P(E_t\mid A_t)\geq \delta$. Then, noting that $P(A_t)=e^{-\beta f(t)}$, it follows from \eqref{eq20} that
\begin{equation} 
P(E_t)\geq \delta e^{-\beta f(t)} = \delta(1-\delta)\gamma_t = e^{-\kappa r(t)(1+o(1))}.  \nonumber
\end{equation}
This, along with \eqref{eq30}, proves \eqref{eq2}, and the lower bound in \eqref{eq1}.

\subsection{Proof of the upper bound}

For the proof of the upper bound, we follow a method that is based on Chebyshev's inequality, similar to the proof of \cite[Thm.\ 1]{E2008}. Let $g:\mathbb{R}_+ \to \mathbb{R}_+$ satisfy $g(t)\to 0$ as $t\to\infty$. Later, we will choose $g_t:=g(t)$ in a precise way. For $t\geq 0$, let $N_t=|Z_t|$ as before, and estimate
\begin{equation}
P(\:\cdot\:)\leq P\left(\:\:\cdot\:\:\big | N_t>e^{\beta t}g_t\right)+P\left(N_t\leq e^{\beta t}g_t\right). \label{eq300}
\end{equation}
We first bound the second term on the right-hand side of \eqref{eq300} from above. It follows from \eqref{eq02} that $P(N_t\leq k)=1-(1-e^{-\beta t})^k\leq ke^{-\beta t}$ for $k\geq 1$. Set $k=\lfloor e^{\beta t}g_t \rfloor$ to obtain 
\begin{equation} \label{eq4}
P\left(N_t\leq e^{\beta t}g_t\right)=P\left(N_t\leq \lfloor e^{\beta t}g_t \rfloor\right)\leq \lfloor e^{\beta t}g_t \rfloor e^{-\beta t}\leq g_t.
\end{equation}
Next, for $t>0$ define 
$$\widetilde{P}_t(\:\cdot\:)=P(\:\cdot\: \mid N_t>e^{\beta t}g_t),$$
and let $\widetilde{E}_t$ be the corresponding expectation.
We now bound the first term on the right-hand side of \eqref{eq300} from above. Let $\mathcal{N}_t$ denote the set of particles of $Z$ that are alive at time $t$. For $u\in\mathcal{N}_t$, let $(Y_u(s))_{0\leq s\leq t}$ denote the ancestral line up to $t$ of particle $u$. By the \emph{ancestral line up to $t$} of a particle present at time $t$, we mean the continuous trajectory traversed up to $t$ by the particle, concatenated with the trajectories of all its ancestors including the one traversed by the initial particle. Note that $(Y_u(s))_{0\leq s\leq t}$ is identically distributed as a Brownian path $(X(s))_{0\leq s\leq t}$ for each $u\in\mathcal{N}_t$. Let us pick randomly, independent of their genealogy and position, $\lfloor e^{\beta t} g_t \rfloor$ particles from $\mathcal{N}_t$. Note that this is possible under $\widetilde{P}_t(\:\cdot\:)$. Denote this collection of particles by $\mathcal{M}_t$, set $M_t:=|\mathcal{M}_t|$, and define 
$$\hat{n}_t=\sum_{u\in \mathcal{M}_t} \mathbbm{1}_{A_u},$$
where $A_u=\{Y_u(s)\in B_t\:\:\forall\:0\leq s\leq t \}$. Observe that $\hat{n}_t$ counts, out of $\mathcal{M}_t$, the particles whose ancestral lines are confined to $B_t$ over $[0,t]$. Since the collection $\mathcal{M}_t$ is chosen independently of the motion process, each particle $u$ in $\mathcal{M}_t$ has an ancestral line $(Y_u(s))_{0\leq s\leq t}$ that is Brownian. Then, since the branching and motion mechanisms are independent of each other, the many-to-one lemma implies that for $t>0$,
\begin{equation} \label{eq5}
\widetilde{E}_t[\hat{n}_t]= p_t M_t= p_t \lfloor e^{\beta t} g_t\rfloor,
\end{equation} 
where $p_t$ is as before the probability of confinement of a standard BM to $B_t$ over $[0,t]$. It is clear that $\hat{n}_t\leq n_t$. At this point, choose $g$ such that $g_t\geq \gamma_t$ for all $t>0$. Then, using Chebyshev's inequality, it follows from \eqref{eq300}, \eqref{eq4}, and \eqref{eq5} that 
\begin{align}
P(n_t<\gamma_t p_t e^{\beta t})&\leq\widetilde{P}_t\left(\hat{n}_t<\gamma_t p_t e^{\beta t}\right)+g_t  \nonumber \\
&= \widetilde{P}_t\left(\widetilde{E}_t[\hat{n}_t]-\hat{n}_t>\widetilde{E}_t[\hat{n}_t]-\gamma_t p_t e^{\beta t}\right)+g_t   \nonumber \\
&\leq\widetilde{P}_t\left(|\hat{n}_t-\widetilde{E}_t[\hat{n}_t]|>p_t \lfloor e^{\beta t} g_t\rfloor-\gamma_t p_t e^{\beta t}\right)+g_t   \nonumber \\
&\leq \frac{\widetilde{\text{Var}}_t(\hat{n}_t)}{[(g_t-\gamma_t) p_t e^{\beta t}-p_t]^2}+g_t,  \label{eq6}
\end{align}  
where $\widetilde{\text{Var}}_t$ denotes the variance associated to $\widetilde{P}_t$. In the rest of the proof, we estimate $\widetilde{\text{Var}}_t(\hat{n}_t)$. 

Let $\mathcal{P}$ be the probability under which the pair $(i,j)$ is chosen uniformly at random among the $M_t(M_t-1)$ possible pairs in $\mathcal{M}_t$, and let $\mathcal{E}$ be the corresponding expectation. Also, for a generic Brownian motion $X$, let $\text{Var}$ denote its variance, and let $A=\{X(s)\in B_t\:\:\forall\:0\leq s\leq t \}$. Then,
\begin{align} \label{eq7}
\widetilde{\text{Var}}_t(\hat{n}_t)&=\widetilde{\text{Var}}_t\left(\sum_{u\in \mathcal{M}_t} \mathbbm{1}_{A_u}\right) \nonumber \\
&=M_t\text{Var}\left(\mathbbm{1}_{A}\right)+\sum_{1\leq i\neq j\leq M_t}\widetilde{\text{Cov}}_t\left(\mathbbm{1}_{A_i},\mathbbm{1}_{A_j}\right)\nonumber \\
&= M_t(p_t-p_t^2)+M_t(M_t-1)\frac{\sum_{1\leq i\neq j\leq M_t}\widetilde{\text{Cov}}_t\left(\mathbbm{1}_{A_i},\mathbbm{1}_{A_j}\right)}{M_t(M_t-1)} \nonumber \\
&\leq g_t e^{\beta t}(p_t-p_t^2)+g_t e^{\beta t}(g_t e^{\beta t}-1)\left[(\mathcal{E}\otimes \widetilde{P}_t)(A_i\cap A_j)-p_t^2\right],
\end{align}  
where $(\mathcal{E}\otimes \widetilde{P}_t)(A_i\cap A_j)=\mathcal{E}[\widetilde{P}_t(A_i\cap A_j)]$ denotes averaging $\widetilde{P}_t(A_i\cap A_j)$ over the $M_t(M_t-1)$ possible pairs in the randomly chosen set $\mathcal{M}_t$. Let $Q^{(t)}$ be the distribution of the splitting time of the most recent common ancestor of $i$th and $j$th particles under $\mathcal{E}\otimes\widetilde{P}_t$. Applying the Markov property at this splitting time, we obtain
\begin{equation} \label{eq8}
(\mathcal{E}\otimes \widetilde{P}_t)(A_i\cap A_j)=p_t \int_0^t \int_{B_t} p_{t-s,x}^t\, \widetilde{p}^{(t)}(0,s,dx)\,Q^{(t)}(ds),
\end{equation}
where $\widetilde{p}^{(t)}(x,s,dy)$ and $p_{s,x}^t$ are as defined in \eqref{neweq3}. Set $p_s^t=p_{s,0}^t$. Then, it follows from \eqref{neweq4} and \eqref{eq8} that
\begin{equation}\label{eq9}
(\mathcal{E}\otimes \widetilde{P}_t)(A_i\cap A_j)=p_t^2 \int_0^t \frac{1}{p_s^t}\, Q^{(t)}(ds) .
\end{equation}
For $t>0$ define
$$J_t:=\int_0^t \frac{1}{p_s^t}\,Q^{(t)}(ds).$$
Then, by \eqref{eq7} and \eqref{eq9}, we have 
\begin{equation} \label{eq10}
\widetilde{\text{Var}}_t(\hat{n}_t)\leq g_t p_t e^{\beta t}+g_t^2 p_t^2 e^{2\beta t}(J_t-1).
\end{equation}
It is clear that $J_t-1\geq 0$. Next, we bound $J_t-1$ from above.

Recall that $r(t)$ is a distance scale. For $k>0$, we will use $kr(t)$ as a time scale. Note that for large $t$ it is atypical for a BM starting at the origin to escape $B_t=B(0,r(t))$ over $[0,kr(t)]$. For large $t$, split $J_t$ up as 
\begin{equation} 
J_t=\int_0^{kr(t)}\frac{1}{p_s^t}\,Q^{(t)}(ds)+\int_{kr(t)}^t\frac{1}{p_s^t}\,Q^{(t)}(ds), \nonumber
\end{equation}
and define
$$J_t^{(1)}:=\int_0^{kr(t)}\frac{1}{p_s^t}\,Q^{(t)}(ds),\quad \quad J_t^{(2)}:=\int_{kr(t)}^t\frac{1}{p_s^t}\,Q^{(t)}(ds).$$

We first bound $J_t^{(1)}-1$ from above. Observe that $p_s^t$ is nonincreasing in $s$, and estimate
\begin{equation} \label{eq12}
J_t^{(1)}=\int_0^{kr(t)}\frac{1}{p_s^t}\,Q^{(t)}(ds) \leq \frac{1}{p_{kr(t)}^t}.
\end{equation}
From Proposition A,
\begin{equation}\label{eq13}
1-p_{kr(t)}^t=\exp\left[-\frac{r(t)}{2k}(1+o(1))\right].
\end{equation} 
It then follows from \eqref{eq12} and \eqref{eq13} that
\begin{equation}\label{eq14}
J_t^{(1)}-1\leq \frac{\exp\left[-\frac{r(t)}{2k}(1+o(1))\right]}{1-\exp\left[-\frac{r(t)}{2k}(1+o(1))\right]} = \exp\left[-\frac{r(t)}{2k}(1+o(1))\right].
\end{equation} 

To bound $J_t^{(2)}$ from above, we will use the following fact on the distribution $Q^{(t)}$ from \cite[Prop. 5]{E2008}: $Q^{(t)}$ is absolutely continuous with respect to the Lebesgue measure, which we denote by $ds$, and its density function, which we denote by $g^{(t)}$, satisfies 
\begin{equation} \label{eqdensityfunction}
\exists\,C>0,\: s_0>0 \quad \text{such that} \quad \forall\:s\geq s_0,\:\: g^{(t)}(s)\leq C s e^{-\beta s}.
\end{equation}
Since $r(t)=o(\sqrt{t})$ by assumption, this implies that for all large $t$ we have $r(t)\leq t$, which implies $1/p_s^t\leq 1/p_s^{r(t)}$. Here, $p_s^{r(t)}=\mathbf{P}_0(\sigma_{B_{r(t)}}\geq s)$ with $B_{r(t)}=B(0,r(r(t)))$ in accordance with previous notation. Then, since $r(t)\to\infty$ as $t\to\infty$, for all large $t$ and for $kr(t)\leq s\leq t$, 
\begin{equation} 
\frac{1}{p_s^t}\leq \frac{1}{p_s^{r(t)}}=\exp\left[\frac{\lambda_d s}{r^2(r(t))}(1+o(1))\right] \leq \exp\left[\frac{2\lambda_d s}{r^2(r(t))}\right], \nonumber
\end{equation} 
where we have used Proposition B. Then, we continue with
\begin{align}
J_t^{(2)}=\int_{kr(t)}^t\frac{1}{p_s^t}\,Q^{(t)}(ds)&\leq \int_{kr(t)}^t  \exp\left[\frac{2\lambda_d s}{r^2(r(t))}\right]Cse^{-\beta s}ds \nonumber \\
&\leq C \int_{kr(t)}^\infty s \exp\left[-\left(\beta-\frac{2\lambda_d}{r^2(r(t))}\right)s\right] ds \nonumber \\
&\leq \exp\left[-\beta kr(t)(1+o(1))\right], 
\label{eq15}
\end{align} 
where we have used integration by parts. From \eqref{eq14} and \eqref{eq15}, we have
\begin{equation}\label{eq16}
J_t-1=J_t^{(1)}-1+J_t^{(2)}\leq \exp\left[-\frac{r(t)}{2k}(1+o(1))\right]+\exp\left[-\beta kr(t)(1+o(1))\right] .
\end{equation}
To optimize the smallest absolute exponent on the right-hand side of \eqref{eq16}, choose $k$ so that
$$\beta kr(t)=\frac{r(t)}{2k}.$$
This yields $k=\frac{1}{\sqrt{2\beta}}$. With this choice of $k$, we have
\begin{equation} 
J_t-1\leq \exp\left[-\sqrt{\beta/2}\,r(t)(1+o(1))\right].  \nonumber
\end{equation}
It then follows from \eqref{eq6} and \eqref{eq10} that
\begin{equation} \label{eq18}
P(n_t<\gamma_t p_t e^{\beta t})\leq \frac{2g_t}{(g_t-\gamma_t)^2 p_t}e^{-\beta t}+\frac{2g_t^2}{(g_t-\gamma_t)^2}e^{-\sqrt{\beta/2}\,r(t)(1+o(1))}+g_t .
\end{equation}     
By assumption, $\gamma_t=e^{-\kappa r(t)}$ with $\kappa>0$. Choose $g_t=2 \gamma_t$. Then, we can continue \eqref{eq18} with
\begin{equation} \label{eq19}
P(n_t<\gamma_t p_t e^{\beta t}) \leq \frac{4}{\gamma_t p_t}e^{-\beta t}+8 e^{-\sqrt{\beta/2}\,r(t)(1+o(1))}+g_t.
\end{equation}
Using Proposition B, and the assumptions that $r(t)\to\infty$ and $r(t)=o(\sqrt{t})$ as $t\to\infty$, we have
$$\gamma_t p_t= e^{-\kappa r(t)-\frac{\lambda_d t}{r^2(t)}(1+o(1))}=\exp[o(t)].$$
Then, using that $g_t=2\gamma_t=2 e^{-\kappa r(t)}$, it follows from \eqref{eq19} that
\begin{equation*} P(n_t<\gamma_t p_t e^{\beta t})\leq
\begin{cases}
e^{-\kappa\,r(t)(1+o(1))}, &  0<\kappa\leq \sqrt{\beta/2}, \\
e^{-\sqrt{\beta/2}\,r(t)(1+o(1))}, & \kappa>\sqrt{\beta/2}.
\end{cases}  
\end{equation*}
This completes the proof of \eqref{eq200} and the upper bound of \eqref{eq1}. \qed

\section*{Acknowledgements} The author would like to thank the anonymous reviewer for valuable suggestions that helped to significantly improve the presentation of the manuscript.

\bibliographystyle{plain}

\end{document}